\newcommand{\beq}{\begin{equation}}
\newcommand{\eeq}{\end{equation}}
\newcommand{\medint}{-\kern -,375cm\int}
\newcommand{\medintinrigo}{-\kern -,315cm\int}
\numberwithin{equation}{section}
\newtheorem{theorem}{Theorem}[section]
\newtheorem{corollary}[theorem]{Corollary}
\newtheorem{lemma}[theorem]{Lemma}
\newtheorem{proposition}[theorem]{Proposition}
\theoremstyle{definition}
\newtheorem{note}[theorem]{Note}
\newtheorem{remark}[theorem]{Remark}
\title[]{On the harmonic characterization of the spheres:\\ a sharp stability inequality and some of its consequences}
\author[G. Cupini, E. Lanconelli]{Giovanni Cupini,  Ermanno Lanconelli}
 \address{Giovanni Cupini, Ermanno Lanconelli: Dipartimento di Matematica, Universit\`a di
  Bologna\\ Piazza di Porta San Donato 5,
 40126 - Bologna, Italy}
 \email{giovanni.cupini@unibo.it}
  \email{ermanno.lanconelli@unibo.it}
 \keywords{Surface Gauss mean value formula, stability, harmonic functions, rigidity}
 \subjclass[2020]{Primary: 35B05; Secondary: 31B05, 35B06}
\thanks{{\em Acknowledgment:}  G. Cupini is member of  Gruppo Nazionale per l'Analisi Matematica, 
 la Proba\-bilit\`a e le loro Applicazioni (GNAMPA) of the Istituto Nazionale di Alta Matematica (INdAM)}
\begin{document} 
 \maketitle
  \begin{abstract}
  	Let $ D$ be  a bounded open subset of $\mathbb R^n$ with $|\partial D| <  \infty$ and let $x_0 $ be a point of $D$. We  introduce a new parameter, that  we call Kuran gap of $\partial D$ w.r.t. $x_0$.  Roughly speaking, this parameter, denoted by $\mathcal{K}(\partial D, x_0)$,  measures the gap between $u(x_0)$ 
  	and  the average of $u$ on $\partial D$  for a particular family of functions $u$ harmonic in  $\overline{D}$, in terms of the Poisson kernel of the biggest ball $B$ centered at $x_0$ and contained in $D$. To do that, we need the domain  $D$ Lyapunov-Dini regular in at least one of the points of $\partial D$
  	 nearest to $x_0$. Our main stability result can be described as follows: $\mathcal{K}(\partial D, x_0)$ is bounded from below by a kind of isoperimetric index, precisely the normalized difference beetween $|\partial D|$ and $|\partial B|$. This extends a stability result by Preiss and Toro, and a more recent  theorem by Agostiniani and Magnanini.
  	Moreover,  from our stability inequality we obtain a new sufficient condition for a harmonic pseudosphere to be a Euclidean sphere, a  result which partially improves a rigidity theorem by Lewis and Vogel.
  	Finally, we give a new solution of the surface version of a solid ``potato'' problem by Aharonov,  Schiffer and Zalcman.
\end{abstract} 
 \subsection*{Basic  notation}
$  $
\vskip 0.1in
\noindent
If $D$ is an open set in  $\mathbb{R}^n$, we write 

$\overline{D}$ :=\,closure of $D$

$|D|$ :=\,Lebesgue measure of $D$  in $\mathbb{R}^n$

  $|\partial D|:=H^{n-1}(D)$= $(n-1)$-Hausdorff measure of $\partial D$ 

\medbreak
\noindent
$d\sigma:=dH^{n-1}$

\medbreak
\noindent
$B(x_0,r)$ :=\,Euclidean ball in $\mathbb{R}^n$ with center $x_0$ and radius $r$ and we denote

$ \omega_n$ := \,$|B(0,1)|$ 

$ \sigma_n$ := \,$|\partial B(0,1)|$

\medbreak
\noindent
$\Delta$ :=\,Laplace operator

\medbreak
\noindent
$ u : D \to \mathbb R$  is harmonic in $D$ :=\, $u$ is smooth and $\Delta u = 0$ in $D$
\medbreak
\noindent
If $D$ is an open set in  $\mathbb{R}^n$, we write 

$\mathcal{H}(D)$ :=\,set of the harmonic functions in $D$

$\mathcal{H}(\overline{D})$ :=\,set of the harmonic functions in an open set containing $\overline{D}$

\medbreak
\noindent
A continuous and increasing function $ \omega : ]0,+\infty[ \to ]0,+\infty[$  is called {\em Dini-function} if 
\begin{itemize}
 \item $s\mapsto \frac{\omega(s)}{s}$ is monotone 
 \item $\displaystyle \int_0^R \frac{\omega(s)}{s}\,ds<+\infty$ for every $R>0$.
\end{itemize}

\medbreak
\noindent
A  function $ f: A\to \mathbb{R}$, $A\subseteq \mathbb{R}^n$   is called {\em Dini-continuous in $z_0\in A$} if there exist a {\em Dini-function} $\omega$ and a real number  $\delta>0$ such that \[|f(z)-f(z_0)|\le \omega(|z-z_0|)
	\]for every $z\in A$, $0<|z-z_0|<\delta$. 
	We say that $f$ is {\em Dini-continuous in $A$} if it is   Dini-continuous at every point of $A$   with the same  $\omega$ and $\delta$.

%

\section{Introduction and main results}\label{sec:1}

\subsection{Past and recent history}\label{ss:1.1}

Let $D\subseteq \mathbb{R}^n$ be open and let $x_0\in D$. Assume $|D|<\infty$. Then 
\begin{center}
{\em $D$ is a ball centered at $x_0$ if and only if \begin{equation}\label{e:mediaintegrale}
u(x_0)=\fint_D u\,dx\qquad \forall u\in \mathcal{H}(D)\cap L^1(D).\end{equation}}
\end{center}
The {\em only if} part of this statement is the classical Gauss solid Mean value Theorem for harmonic functions. The {\em if} part is a 1972  Theorem by Kuran 
\cite{kuran} (see also \cite{cuplan}). 

This result is {\em stable} in the following sense: 
if $u(x_0)$ {\em is close} to $\fint_D u\,dx$ for every $u\in \mathcal{H}(D)\cap L^1(D)$, then $D$ {\em is close} to a ball centered at $x_0$. 

To be precise, we need the following definition: 
\begin{align*}G(D,x_0)=&\text{{\rm solid Gauss gap of $D$ w.r.t. $x_0$}}\\ :=&
{\underset{u\ne 0}{\sup_{u\in \mathcal{H}(D)\cap L^1(D)}}}\frac{\displaystyle \left|u(x_0)-\fint_{D} u\,dx\right|}{\displaystyle \fint_{D} |u|\,dx}. \end{align*}
Then, if $B:=B(x_0,r_{x_0})$ is the biggest ball contained in $D$ and centered at $x_0$, we have 
\begin{equation}\label{e:3-1.2} G(D,x_0)\ge c(n)\frac{|D\setminus B|}{|D|}\end{equation}
where $c(n)$ is a strictly positive constant only depending on the dimension, see  \cite{cupfuslanzho}.

The {\em surface counterpart} 
of the previous results is a more difficult and deeper matter. Indeed: if $D\subseteq \mathbb{R}^n$ is a bounded open set with 
$|\partial D|<\infty$, such that 
\begin{equation}\label{e:3-1.3} u(x_0)=\fint_{\partial D} u\,d\sigma\qquad \forall u\in \mathcal{H}(D)\cap C(\overline{D})\end{equation}
for a suitable $x_0\in D$, then, in general, $\partial D$ is not a Euclidean  sphere centered at $x_0$. 

We will call 
\begin{center}
{\em harmonic pseudosphere centered at $x_0$}\end{center} the boundary $\partial D$ of any bounded open set $D\ni x_0$, with $|\partial D|<\infty$ and satisfying \eqref{e:3-1.3}. Keldysch and Lavrentieff - in 1937 - proved the existence of a harmonic
 preudosphere in $\mathbb{R}^2$, which is not a circle, see \cite{KL}. Many years later - in 1991  - Lewis and Vogel proved that in every Euclidean space $\mathbb{R}^n$, $n\ge 2$, 
there exist harmonic preudospheres which are not spheres, see \cite{LV1991}\footnote{Lewis and Vogel proved something stronger: the existence of harmonic pseudospheres 
  homeomorphic to $\partial B(0,1)$ via H\"older continuous homeomorphisms. The interested reader can find more details on  the spherical and volume mean value properties of harmonic  functions  and on the pseudospheres on the excellent survey by Netuka and  Vesel\'y \cite{NV}.}.

Then, the following question naturally arises: {\em when is a harmonic  pseudosphere a sphere}? Or, equivalently: is it possible to characterize  
the Euclidean spheres in terms of the surface mean value property for harmonic functions? 
The answer is yes. Here is a list of the main positive answers to this question. 

A harmonic pseudosphere $\partial D$ is a Euclidean sphere if 
\begin{itemize}
\item[(i)] $\partial D$ is $C^{1,\alpha}$: Friedman-Littman (1962) \cite{FL},
\item[(ii)] $\partial D$ is $C^{1}$: Fichera (1985)  \cite{Fichera}, see also \cite{FicheraMessina},
\item[(iii)] $\partial D$ is Lipschitz continuous:   Lewis and Vogel (1989)  \cite{LVLipshitz},
\item[(iv)] $D$ is Dirichlet regular and $H^{n-1}\llcorner \partial D$ has at most Euclidean growth:   Lewis and Vogel (2002) \cite{LV2002}.
\end{itemize}

We recall that, by definition, $H^{n-1}\llcorner \partial D$ {\em has at most Euclidean growth} if 
\begin{equation}\label{EG}
{\underset{0<r<1}{\sup_{x\in \partial D}}}
\frac{H^{n-1}(\partial D\,\cap\,B(x,r)) }{r^{n-1}}<\infty.
\end{equation}
Moreover, a bounded open set $D\subseteq \mathbb{R}^n$ is called {\em Dirichlet regular} if the boundary value problem \[\begin{cases}\Delta u=0\ \text{in $D$}\\ u|_{\partial D}=\varphi\end{cases}\]
has a classical solution $u\in \mathcal{H}(D)\cap C(\overline{D})$, for every $\varphi\in C(\partial D)$.

In 2007 D. Preiss and T. Toro  \cite{PT} proved the stability of Lewis and Vogel's rigidity theorem in  (iv). To state their result
 it is convenient for us to introduce the {\em surface} analogous of the solid Gauss gap. If $D\subseteq \mathbb{R}^n$ is a 
bounded open set containing $x_0$ and with $|\partial D|<\infty$, we define 
\begin{align*}\mathcal{G}(\partial D,x_0)=&\,\text{{\rm surface Gauss gap of $\partial D$ w.r.t. $x_0$}}
\\ :=&{\underset{u\ne 0}{\sup_{u\in \mathcal{H}(D)\cap C(\overline{D})}}}\frac{\left|u(x_0)-\displaystyle \fint_{\partial D} u\,d\sigma\right|}{\displaystyle \fint_{	\partial D} |u|\,d\sigma}. \end{align*}
Then, obviously, $\mathcal{G}(\partial D,x_0)=0$ iff $\partial D$ is a harmonic pseudosphere centered at $x_0$. Moreover, we note that  $\mathcal{G}(\partial D,x_0)$ 
 is invariant w.r.t. Euclidean 
translations, rotations and dilations.
 
Roughly speaking, Preiss and Toro proved that if $\mathcal{G}(\partial D,x_0)$ is small enough and $D$ and $\partial D$ satisfy the assumptions of 
2002 - Lewis and Vogel's rigidity Theorem, then $\partial D$ is geometrically close to a sphere. More precisely, Lemma 3.3 in \cite{PT} can be stated as follows. 

\medbreak
\noindent
{\bf Theorem.}
[Preiss and Toro \cite{PT}]
Let $D\subseteq \mathbb{R}^n$ be a bounded, open    set containing $x_0$ and with $|\partial D|<\infty$. 
Assume that $D$ is Dirichlet regular and $\partial D$ satisfies \eqref{EG}. Then, if $\mathcal{G}(\partial D,x_0)$ is 
{\em sufficiently small},  the following stability inequalities hold
\begin{equation}\label{1.4}
\mathcal{G}(\partial D,x_0)\ge c\frac{|\partial D|-|\partial B|}{|\partial D|}
\end{equation}

\begin{equation}
\label{1.5}
\mathcal{G}(\partial D,x_0)\ge c\frac{|\partial B^*|-|\partial D|}{|\partial D|},
\end{equation} 
where $c>0$ is an absolute constant, $B$ is the {\em biggest ball centered at $x_0$} and contained in $D$, $B^*$ is the {\em smallest 
ball centered at $x_0$} and containing $D$.

Hence
\[B\subseteq D\subseteq B^*\]
and \begin{equation}
	\label{e:PTinout}\mathcal{G}(\partial D,x_0)\ge \frac{c}{2}\frac{|\partial B^*|-|\partial B|}{|\partial D|}.\end{equation}
\begin{note}
For reader's convenience, we note that Preiss and Toro stated their  stability result in terms of the Poisson kernel of $D$ with pole at $x_0$.
 In Appendix, Subsection \ref{ss:comparison},  we will show how their result implies \eqref{1.4} and \eqref{1.5}. 
 
In case $n=2$ and for $C^{1,\alpha}$-domains, an inequality as  \eqref{e:PTinout} 
 has been proved by Agostiniani and Magnanini  \cite{AMSta2011} without the smallness assumption on  the Gauss gap.  For analogous results we refer directly 
to the survey's paper by Magnanini \cite{MagnaniniBP}.  
\end{note}

\medbreak
We want to stress that the Preiss and Toro's smallness assumption on the surface Gauss gap  obviously 
 implies $\mathcal{G}(\partial D,x_0)<\infty$, a condition which in general is not satisfied even for $C^1$-domains, see \cite{JK}. Then, 
assuming $\mathcal{G}(\partial D,x_0)$ small enough implicitly implies regularity properties of $\partial D$. This is clearly and deeply analyzed in Preiss and Toro's paper: 
we directly refer to \cite{PT}  for details.
 In our Appendix we will simply show that   $\mathcal{G}(\partial D,x_0)<\infty$ if $D$ is a {\em Lyapunov-Dini domain}, i.e. a $C^1$-domain 
endowed with an outward normal map
\[\partial D\ni x\mapsto \nu(x)\in \mathbb{R}^n,\] 
which is {\em Dini-continuous} at any point.

This last result comes from a quite well known Widman's Theorem on the continuity up to the boundary of   the 
gradient of the Green function of Lyapunov-Dini domains, see \cite{Widman}.

\subsection{The closeness to a sphere does not imply the smallness of $\mathcal{G}$}

Preiss and Toro's Theorem shows that the smallness of the surface Gauss gap is a sufficient condition for a domain to be geometrically close to a sphere. 
This condition, actually, is only sufficient, as the following theorem shows.

\begin{theorem}\label{t:1.1}
In $\mathbb{R}^n$, $n\ge 2$, there exists a family $(\hat D(\varepsilon))_{0<\varepsilon<\varepsilon_0}$ of Lipschitz-continuous domains containing 
the origin and such that, for every $\varepsilon$, $0<\varepsilon<\varepsilon_0$,
\begin{itemize}
\item[(i)] $B(0,1)\subseteq \hat D(\varepsilon)\subseteq B(0,1+\varepsilon)$ 
\item[(ii)] $\frac{1}{c}\varepsilon^{n-1}\le |\partial \hat D(\varepsilon)|-|\partial B(0,1)|\le c\,\varepsilon^{n-1}$
\item[(iii)] $\displaystyle \liminf_{\varepsilon\to 0}\mathcal{G}(\partial \hat D(\varepsilon),0)>0$,
\end{itemize}
where $c$ is an absolute constant. Moreover  
$B(0,1)$ is the biggest ball centered at $0$ and  contained in  
$\hat D(\varepsilon)$.
\end{theorem}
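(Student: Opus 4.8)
The plan is to obtain $\hat D(\varepsilon)$ by gluing onto the unit ball $B:=B(0,1)$ a controlled number $N=N(\varepsilon)$ of thin outward \emph{tentacles} of length $\ell:=\varepsilon/2$ and transversal width $\delta:=\varepsilon^{2}$. Concretely, take $N$ to be the least integer $\ge(\ell/\delta)^{\,n-2}$ (so $N=1$ when $n=2$ and $N\asymp\varepsilon^{-(n-2)}$ in general), choose $p_{1},\dots,p_{N}\in\partial B$ pairwise $100\delta$-separated --- possible since $N\delta^{\,n-1}\asymp\varepsilon^{\,n}\to0$ --- and at each $p_{i}$ attach to $B$, along the outward radial direction, the straight cylinder of axis $[p_{i},(1+\ell)p_{i}]$ and radius $\delta$, capped by a half-ball of radius $\delta$ at the outer end, flared to meet $\partial B$ over the scale $\delta$ at the inner end, and with the outer rim rounded on the same scale; call this tentacle $T_{i}$ and set $\hat D(\varepsilon):=B\cup\bigcup_{i=1}^{N}T_{i}$. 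Since all gluings, caps and roundings are performed self-similarly at scale $\delta$, $\hat D(\varepsilon)$ is a bounded Lipschitz domain with $0\in B\subseteq\hat D(\varepsilon)$; in particular it is Dirichlet regular and $|\partial\hat D(\varepsilon)|<\infty$.

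Properties (i), (ii) and the maximality of $B$ are then elementary. Each $T_{i}$ lies in $B(0,1+\ell+2\delta)\subset B(0,1+\varepsilon)$, so $B\subseteq\hat D(\varepsilon)\subseteq B(0,1+\varepsilon)$; and since no single tentacle is wide enough to contain a whole sphere $\partial B(0,1+\eta)$, $B$ is the biggest ball centered at $0$ inside $\hat D(\varepsilon)$. For (ii), $\partial\hat D(\varepsilon)$ is $\partial B$ with $N$ caps of $H^{n-1}$-measure $O(\delta^{\,n-1})$ removed and $N$ tentacle surfaces added, the lateral part of each having $H^{n-1}$-measure comparable to $\delta^{\,n-2}\ell$ and the cap/rounding contributing $O(\delta^{\,n-1})$; hence, using $\delta\ll\ell$,
\[
|\partial\hat D(\varepsilon)|-|\partial B|\ \asymp\ N\,\delta^{\,n-2}\ell\ \asymp\ \varepsilon^{\,n-1},
\]
since $(\ell/\delta)^{\,n-2}\delta^{\,n-2}\ell=\ell^{\,n-1}=(\varepsilon/2)^{\,n-1}$, and with the integer $N$ chosen as above both inequalities in (ii) follow with a constant depending only on $n$.

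For (iii), write $\omega^{x}$ for the harmonic measure of $\hat D(\varepsilon)$ at $x$ and let $s\in[0,\ell]$ be the axial coordinate in each tentacle. Put $T_{i}'':=\overline{T_{i}}\cap\{s\ge\tfrac34\ell\}\cap\partial\hat D(\varepsilon)$ and $T':=\bigcup_{i}T_{i}''$, so $\sigma(T')\asymp N\delta^{\,n-1}$, and let $\varphi_{\varepsilon}\in C(\partial\hat D(\varepsilon))$, $0\le\varphi_{\varepsilon}\le1$, be $\equiv1$ on $T'$ and supported in a set $T''\supset T'$ with $\sigma(T'')\le2\sigma(T')$. By Dirichlet regularity the solution $u_{\varepsilon}\in\mathcal H(\hat D(\varepsilon))\cap C(\overline{\hat D(\varepsilon)})$ with datum $\varphi_{\varepsilon}$ satisfies $0\le u_{\varepsilon}\le1$, $u_{\varepsilon}(0)=\int\varphi_{\varepsilon}\,d\omega^{0}\le\omega^{0}(T'')$ and $\fint_{\partial\hat D(\varepsilon)}u_{\varepsilon}\,d\sigma=\fint_{\partial\hat D(\varepsilon)}\varphi_{\varepsilon}\,d\sigma\ge\sigma(T')/|\partial\hat D(\varepsilon)|$, so $u_{\varepsilon}$ is an admissible competitor for $\mathcal G(\partial\hat D(\varepsilon),0)$ and
\[
\mathcal G(\partial\hat D(\varepsilon),0)\ \ge\ \frac{\bigl|u_{\varepsilon}(0)-\fint_{\partial\hat D(\varepsilon)}u_{\varepsilon}\,d\sigma\bigr|}{\fint_{\partial\hat D(\varepsilon)}|u_{\varepsilon}|\,d\sigma}\ \ge\ 1-\frac{\omega^{0}(T'')}{\sigma(T')/|\partial\hat D(\varepsilon)|}
\]
as soon as the last fraction is $<1$. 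The key estimate is then
\[
\omega^{0}(T_{i}'')\ \le\ C\exp(-c\,\ell/\delta)\qquad(i=1,\dots,N),
\]
with $C,c$ depending only on $n$: given it, $\omega^{0}(T'')\le CN\exp(-c\ell/\delta)$, so the fraction above is $\le C'\varepsilon^{-2(n-1)}\exp(-c/(2\varepsilon))\xrightarrow{\varepsilon\to0}0$, whence $\liminf_{\varepsilon\to0}\mathcal G(\partial\hat D(\varepsilon),0)\ge1>0$, which is (iii). To prove the key estimate, set $\Sigma_{i}:=\overline{T_{i}}\cap\{s=\tfrac12\ell\}$; the component of $\hat D(\varepsilon)\setminus\Sigma_{i}$ containing $0$ consists of $B$ together with the inner halves $\{s<\tfrac12\ell\}$ of the tentacles, hence touches $\partial\hat D(\varepsilon)$ only inside $\partial\hat D(\varepsilon)\setminus T_{i}''$, so the maximum principle applied to $v:=\omega^{\cdot}(T_{i}'')$ gives $\omega^{0}(T_{i}'')=v(0)\le\max_{\Sigma_{i}}v$; and since $v$ is harmonic with $0\le v\le1$ in the cylindrical middle portion $\{2\delta<s<\tfrac34\ell\}$ of $T_{i}$ and vanishes on its lateral wall, a Saint-Venant type argument --- testing $v$ against the rescaled first Dirichlet eigenfunction of the cross-section (eigenvalue $\asymp\delta^{-2}$), which shows that $a(s):=\int_{\{s\}\cap T_{i}}v$ (weighted) solves $a''=(\mu_{1}/\delta^{2})a$ with $0\le a\le C\delta^{\,n-1}$, and then transferring the resulting exponential decay to a pointwise bound by interior estimates --- yields $\max_{\Sigma_{i}}v\le C\exp(-c\ell/\delta)$.

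The construction and the verification of (i), (ii) and the maximality of $B$ are routine; the one substantive point is the Saint-Venant estimate in the last step, i.e. the quantitative fact that a bounded harmonic function vanishing on the lateral wall of a cylinder of radius $\delta$ decays like $\exp(-c\,(\text{axial distance})/\delta)$ into the cylinder. This is classical, and the choice $\delta=\varepsilon^{2}\ll\ell=\varepsilon/2$ is dictated precisely by the requirement that, after summing the key estimate over the $N(\varepsilon)\asymp\varepsilon^{-(n-2)}$ tentacles, the total harmonic measure of the tips remain much smaller than $\sigma(T')/|\partial\hat D(\varepsilon)|\asymp N\delta^{\,n-1}\asymp\varepsilon^{\,n}$ --- something the exponential bound comfortably achieves.
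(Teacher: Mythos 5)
Your construction and argument are correct in spirit and could be made to work, but they take a genuinely different route from the paper. The paper attaches a \emph{single} beak to the sphere, whose tip reaches in very close to the center $x_0$ (at distance $\varepsilon^m$, $m>n$), and then tests the Gauss gap against one fixed explicit harmonic function $u=c_n\,\partial^2_{x_1}\Gamma$, which is positive inside the cone supporting the beak, vanishes on its lateral boundary, and blows up like $|x|^{-n}$ at the pole; the dominant term is the explicit integral of $u$ over the small spherical cap at the tip, of size $\asymp \varepsilon^{-m}$, which overwhelms all other contributions by direct computation. Your version instead glues $N(\varepsilon)\asymp\varepsilon^{-(n-2)}$ short, thin tentacles onto $\partial B$, uses as competitor the Dirichlet extension of a cutoff supported near the tentacle tips, and appeals to the Saint-Venant exponential decay of harmonic measure in a thin tube to show $u_\varepsilon(0)$ is negligible against $\fint u_\varepsilon$. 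Both reach the same conclusion $\liminf\mathcal G>0$; yours is softer and more potential-theoretic (the key lemma is a classical but nontrivial decay estimate that you only sketch), whereas the paper's is elementary and fully explicit. One further structural advantage of the paper's single-beak construction is that it is also used, in Section~\ref{s:palla-col-becco}, to compute the Kuran gap of the same domain from above and below (the sharpness estimate \eqref{e:nuova}); your multi-tentacle domain was not designed with that in mind, though the theorem as stated does not require it.

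A few small inaccuracies worth flagging, none fatal. First, with $T_i'':=\overline{T_i}\cap\{s\ge\frac34\ell\}\cap\partial\hat D(\varepsilon)$ the set $T'$ contains a quarter of each lateral wall, so $\sigma(T')\asymp N\,\delta^{\,n-2}\ell\asymp\varepsilon^{\,n-1}$, not $N\delta^{\,n-1}\asymp\varepsilon^{\,n}$ as you wrote; this only makes the denominator in your final fraction larger, so the conclusion is unaffected. Second, the component of $\hat D(\varepsilon)\setminus\Sigma_i$ containing the origin also contains \emph{all} of the other tentacles $T_j$, $j\ne i$, not only their inner halves; the maximum principle argument still works because $v=\omega^{\cdot}(T_i'')$ vanishes on $\partial\hat D(\varepsilon)\setminus T_i''$ and in particular on $\partial T_j\cap\partial\hat D(\varepsilon)$, but the description should be corrected. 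Third, the exponential estimate is stated for $\omega^0(T_i'')$ but then invoked for $\omega^0(T'')$, and the auxiliary set $T''$ is constrained only by $\sigma(T'')\le2\sigma(T')$, which does not by itself place it in the outer portions of the tentacles; you should simply stipulate that $T''$ lies in $\bigcup_i\bigl(\overline{T_i}\cap\{s\ge\frac58\ell\}\bigr)$, after which the same decay bound applies.
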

 
  We will prove 
  Theorem \ref{t:1.1}  in Section \ref{s:palla-col-becco}, see in particular the Subsection \ref{ss:conclT12}. 

\medbreak
In next Subsection \ref{ss:1.c} we will introduce a new parameter, that we will call {\em Kuran gap}
 of the boundary of  a domain $D$ w.r.t.  a point $x_0\in D$. Roughly speaking, this parameter measures the gap between $u(x_0)$ and $\fint_{\partial D} u\,d\sigma$, for a particular family of functions
 $u\in  \mathcal{H}(\overline{D})$, in terms of the Poisson kernel of the biggest ball centered at $x_0$ and contained in $D$. 
To do that, we need the domain $D$ Lyapunov-Dini regular in at least {\em one} of the points of $\partial D$ nearest to $x_0$, see Subsection \ref{ss:1.c}. We will denote this new parameter by 
$\mathcal{K}(\partial D,x_0)$  and we will prove that inequality \eqref{1.4} in Preiss and Toro's Theorem cited above remains true if we replace in it 
$\mathcal{G}(\partial D,x_0)$ with $\mathcal{K}(\partial D,x_0)$. 

Moreover, if $(\hat D(\varepsilon))_{0<\varepsilon<\varepsilon_0}$ is the family of domains in Theorem \ref{t:1.1},
  we will  prove 
 that there exists  $c>0$ such that 
  for every $\varepsilon\in ]0,\varepsilon_0[$  \begin{equation}
  	\label{e:nuova}
  	\frac{1}{c}\varepsilon^{n-1}
  	\le  	\mathcal{K}(\partial \hat D(\varepsilon),0)\le c\,\varepsilon^{n-1},
  \end{equation}
 see again Subsection \ref{ss:conclT12}.

In particular,  the smallness of $\mathcal{K}(\partial \hat D(\varepsilon),0)$ is equivalent to the closeness of $\partial \hat D(\varepsilon)$ to the Euclidean sphere 
$\partial B(0,1)$ (cfr. (i) and (ii) in Theorem \ref{t:1.1}),  contrary to what happens for the surface Gauss gap $\mathcal{G}(\partial \hat D(\varepsilon),0)$, see (iii) in Theorem \ref{t:1.1}.

\subsection{The Kuran gap and our stability result}\label{ss:1.c} 
To define the  new parameter $\mathcal{K}$  we will use the functions introduced by Kuran in \cite{kuran}.

  For  
  $\alpha \in \mathbb{R}^n$,  we call {\em $\alpha$-Kuran function} the map 
$k_\alpha:\mathbb{R}^n\setminus \{\alpha\}\to \mathbb{R}$, 
\begin{equation}\label{e:halpha}k_{\alpha}=1+h_{\alpha},
\end{equation}
where 
\begin{equation}\label{e:kalpha}h_\alpha(x):=|\alpha|^{n-2}\frac{|x|^2-|\alpha|^2}{|x-\alpha|^n},\qquad x\ne \alpha.
\end{equation}
Up to a multiplicative constant, $h_\alpha$ is the Poisson kernel of $B(0,|\alpha|)$.

It is  quite well known that 
\[k_{\alpha}\in \mathcal{H}(\mathbb{R}^n\setminus\{\alpha\})\quad \text{for every $\alpha\in \mathbb{R}^n$.}\]
We note that, being $h_{\alpha}(0)=-1$, we have 
\[k_\alpha(0)=0 \quad  \text{for every $\alpha\ne 0$.} \]

\medbreak

$\bullet$ {\sc  Lyapunov-Dini  regularity at a boundary  point.} An open set $D\subseteq \mathbb{R}^n$ is 
\begin{center}
{\em Lyapunov-Dini} at $z\in \partial D$ 
\end{center}
if there exists a neighborhood $V$ of $z$ s.t. 
\[\partial D \cap V \ \text{is a $C^1$-manifold}\]
and the outward normal map 
\[ \partial D \cap V\ni x\mapsto \nu(x)\in \mathbb{R}^n\]
is Dini-continuous at $z$.

\medbreak

$\bullet$ {\sc The touching set.} 
If $D$ is bounded, $x_0\in D$ and $B$ denotes the biggest ball centered at $x_0$ and contained in $D$; i.e.  
\[B:=B(x_0,r_{x_0}),\qquad r_{x_0}=\operatorname{dist}(x_0,\partial D),\] 
then we define 
\begin{align*}T(\partial D, x_0)=&\text{\em regular touching set of $\partial D$ w.r.t. $B$}
 \\ :=&\{z\in \partial D\cap \partial B\,:\, D\ \text{is Lyapunov-Dini at $z$}\}.
\end{align*}

\medbreak

$\bullet$ {\sc Definition of Kuran gap.} Let $D\subseteq \mathbb{R}^n$ be bounded and open with $|\partial D|<+\infty$. Assume for a moment $0\in D$.

If $T(\partial D,0)=\emptyset$ we put 
\begin{equation}\label{d:Kurangap0}\mathcal{K}(\partial D,0)=+\infty.\end{equation}
If $T(\partial D,0)\neq \emptyset$ we define 
\begin{equation}\label{d:Kurangap}\mathcal{K}(\partial D,0):=\inf_{z\in T(\partial D,0)}L(z),\end{equation}
where 
\begin{align}\nonumber L(z):=&
{\underset{\alpha\notin \overline{D}}{\liminf_{\alpha\searrow z}}}\left|k_{\alpha}(0)-\fint_{\partial D}k_{\alpha}\,d\sigma\right|
\\ =& {\underset{\alpha\notin \overline{D}}{\liminf_{\alpha\searrow z}}}\left|\fint_{\partial D}k_{\alpha}\,d\sigma\right|;\label{d:Lz}\end{align}
with the notation $\alpha\searrow z$, $ \alpha\notin \overline{D}$, we mean that $\alpha$ {\em radially goes to $z$} from outside of 
$\overline{D}$; i.e. 
\[\alpha=tz, \ \ t>1, \ \ \alpha\notin \overline{D}, \ \ t\to 1.\] 
If $x_0\in D$, $x_0\ne 0$, we define 
\[\mathcal{K}(\partial D, x_0):=\mathcal{K}(\partial (-x_0+D), 0).\]
We call 
\[\mathcal{K}(\partial D, x_0)\ \text{\em the Kuran gap of $D$ w.r.t. $x_0$.}\]
We will prove that $\mathcal{K}(\partial D, x_0)<\infty$ if 
$T(\partial D,x_0)\ne \emptyset$, see Remark \ref{r:3.1bis}. 
 Moreover, as it is quite easy to prove,  $\mathcal{K}(\partial D, x_0)$ is invariant w.r.t. Euclidean 
\begin{center}
{\em translations, rotations} and {\em dilations}. 
\end{center}
Moreover, if $0\in D$, then $k_{\alpha}\in \mathcal{H}(\overline{D})$ if $\alpha\notin \overline{D}$,
 so that, if $\partial D$ is a harmonic pseudosphere centered at $0$ (see \eqref{e:3-1.3}), 
\[k_{\alpha}(0)=\fint_{\partial D}k_{\alpha}\,d\sigma\qquad \text{for every} \ \alpha\notin \overline{D}. \]
As a consequence, if 
$T(\partial D,0)\neq \emptyset$, \[\mathcal{K}(\partial D,0)=0. \]
In general, due to the translation invariance of the Kuran gap, 
\[\mathcal{K}(\partial D,x_0)=0 \]
if {\em  $\partial D$ is a harmonic pseudosphere centered  at $x_0$ and $T(\partial D,x_0)\neq \emptyset$}.
\begin{remark}\label{r:aggiunto} From the very definition of  
$\mathcal{K}(\partial D,x_0)$, if $T(\partial D,x_0)\ne \emptyset$ one obtains 
\[\mathcal{K}(\partial D,x_0)=0\]
under the  condition \begin{equation}u(x_0)=\fint_{	\partial D}u\,d\sigma\qquad \forall u\in \mathcal{H}(\overline{D}),\label{e:u-intpartialu}\end{equation}
that is weaker than requiring that $\partial D$ is a harmonic pseudosphere, because  $\mathcal{H}(\overline{D})\subseteq \mathcal{H}(D)\cap C(\overline{D})$. 
To prove this claim, we can assume, without loss of generality, $x_0=0$. In this case, if  
$T(\partial D,0)\ne \emptyset$, the Kuran gap is defined by using the functions $k_{\alpha}$, which are harmonic in $\overline{D}$ and  satisfy 
\[ k_{\alpha}(0)=\fint_{	\partial D}k_{\alpha}\,d\sigma \] if \eqref{e:u-intpartialu} is satisfied. 
\end{remark}
\medbreak

$\bullet$ {\sc Our stability inequality.} The main result of this paper, proved in Section \ref{sec:2}, is the following stability result.

\begin{theorem}\label{t:cuplansurf}
Let $D$ be a  bounded open subset of $\mathbb{R}^n$ 
containing $x_0$ and with $|\partial D|<\infty$. 

Let $B$ be the {\em biggest ball centered at $x_0$ and contained in $D$}; i.e. 
\[B:=B(x_0,r_{x_0}), \quad r_{x_0}:=\operatorname{dist}(x_0,\partial B).\]
Then \begin{equation}
\label{e:1.6}
\mathcal{K}(\partial D,x_0)\ge \frac{|\partial D|-|\partial B|}{|\partial D|}.
\end{equation}
\end{theorem}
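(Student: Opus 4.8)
The plan is to use the translation invariance of the Kuran gap to reduce to $x_0=0$, and then to estimate $L(z)$ for an arbitrary $z\in T(\partial D,0)$. If $T(\partial D,0)=\emptyset$ then $\mathcal{K}(\partial D,0)=+\infty$ and there is nothing to prove; and if $|\partial D|\le|\partial B|$ the right-hand side of \eqref{e:1.6} is $\le 0\le\mathcal{K}(\partial D,0)$, so we may also assume $|\partial D|>|\partial B|$. Fix such a $z$ and set $r_0:=\dist(0,\partial D)$, so $z\in\partial B(0,r_0)\cap\partial D=\partial B\cap\partial D$; since $B\subseteq D$ has $z$ on its boundary and $\partial D$ is $C^1$ near $z$, the tangent cone of $D$ at $z$ contains that of $B$, both are half-spaces, hence they coincide, the outer unit normal of $D$ at $z$ equals $z/r_0$, and the radial approach $\alpha=tz\searrow z$ ($t\to1^+$) is genuinely from outside $\overline D$. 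The starting identity is
\[\int_{\partial D}k_\alpha\,d\sigma=\bigl(|\partial D|-|\partial B|\bigr)+R_\alpha,\qquad R_\alpha:=\int_{\partial D}h_\alpha\,d\sigma-\int_{\partial B}h_\alpha\,d\sigma ,\]
which follows from $\int_{\partial B}k_\alpha\,d\sigma=0$: indeed $h_\alpha$ is harmonic in a neighbourhood of $\overline B$ (because $|\alpha|>r_0$), so the mean value property gives $\fint_{\partial B}h_\alpha\,d\sigma=h_\alpha(0)=-1$. Since $k_\alpha(0)=0$, once we show $\liminf_{\alpha\searrow z}R_\alpha\ge 0$ we obtain $\liminf_{\alpha\searrow z}\int_{\partial D}k_\alpha\,d\sigma\ge|\partial D|-|\partial B|>0$, hence $\int_{\partial D}k_\alpha\,d\sigma>0$ for $\alpha$ near $z$, hence $L(z)=\tfrac1{|\partial D|}\liminf_{\alpha\searrow z}\int_{\partial D}k_\alpha\,d\sigma\ge\tfrac{|\partial D|-|\partial B|}{|\partial D|}$; taking the infimum over $z\in T(\partial D,0)$ yields \eqref{e:1.6}.

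To bound $R_\alpha$ I would compare $h_\alpha$ with its radial limit $h_z$. Note that $h_z\ge 0$ on $\partial D$ (since $|x|\ge r_0$ there), that $h_z\equiv 0$ on $\partial B$, and that $h_z\in L^1(\partial D,d\sigma)$: near $z$ the Lyapunov--Dini condition forces $0\le|x|^2-r_0^2\lesssim|x-z|\,\omega(|x-z|)$ for $x\in\partial D$, so $h_z(x)\lesssim\omega(|x-z|)\,|x-z|^{1-n}$, which is $d\sigma$-integrable near $z$ because $\int_0^1\omega(s)s^{-1}\,ds<\infty$ (and $h_z$ is bounded away from $z$). Split $\partial D$ and $\partial B$ into their parts inside and outside $B(z,\delta)$. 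On the outside parts $h_\alpha\to h_z$ uniformly as $\alpha\searrow z$, so the outside contribution to $R_\alpha$ tends to $\int_{\partial D\setminus B(z,\delta)}h_z\,d\sigma-\int_{\partial B\setminus B(z,\delta)}h_z\,d\sigma=\int_{\partial D\setminus B(z,\delta)}h_z\,d\sigma\ge 0$. Everything therefore hinges on the local estimate
\[\limsup_{\alpha\searrow z}\Bigl|\int_{\partial D\cap B(z,\delta)}h_\alpha\,d\sigma-\int_{\partial B\cap B(z,\delta)}h_\alpha\,d\sigma\Bigr|\le\eta(\delta),\qquad \eta(\delta)\to0\ \ (\delta\to0^+),\]
which, combined with the previous line, gives $\liminf_{\alpha\searrow z}R_\alpha\ge\int_{\partial D\setminus B(z,\delta)}h_z\,d\sigma-\eta(\delta)$ for every small $\delta$, hence $\ge 0$.

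For the local estimate I would rotate so that $z=r_0 e_n$ and write $\partial D$, $\partial B$ near $z$ as graphs $x_n=\phi_D(x')$, $x_n=\phi_B(x')$ over $x'\in\mathbb{R}^{n-1}$, with $\phi_D(0)=\phi_B(0)=r_0$ and $\nabla\phi_D(0)=\nabla\phi_B(0)=0$. Dini-continuity of the normal at $z$ translates into $|\nabla\phi_D(x')|\le\omega(|x'|)$, hence $|\phi_D(x')-r_0|\le|x'|\omega(|x'|)$; after enlarging $\omega$ so that it also dominates the (Lipschitz) modulus of $\nabla\phi_B$, we get $|\phi_D(x')-\phi_B(x')|\le C|x'|\omega(|x'|)$ and $\bigl|\,|x'|^2+\phi_D(x')^2-r_0^2\,\bigr|\le C|x'|\omega(|x'|)$, i.e. $|x|^2-r_0^2=O(|x'|\omega(|x'|))$ on $\partial D$ near $z$ — the crucial consequence of the tangency of $\partial D$ to $\partial B$ at $z$. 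Parametrizing the two surface integrals over the nearly coinciding sets $V^D_\delta,V^B_\delta\subseteq\{|x'|<\delta\}$ (whose symmetric difference has $(n-1)$-measure $\lesssim\delta^{n-1}\omega(\delta)^2$, again by the smallness of $\phi_D-\phi_B$) and using $\int_{\partial B}|h_\alpha|\,d\sigma=-\int_{\partial B}h_\alpha\,d\sigma=|\partial B|$ (so the kernel has bounded $L^1$-mass on $\partial B$ uniformly in $\alpha$), I would bound the difference by: (a) error terms from $|V^D_\delta\triangle V^B_\delta|$ and from $\bigl|\sqrt{1+|\nabla\phi_D|^2}-\sqrt{1+|\nabla\phi_B|^2}\bigr|\lesssim\omega(\delta)^2$, all $O(\omega(\delta)^2)$ uniformly in $\alpha$; and (b) the term $\int_{|x'|<\delta}|\phi_D(x')-\phi_B(x')|\,\sup_s|\partial_{x_n}h_\alpha(x',s)|\,dx'$. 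For (b) one computes $\partial_{x_n}h_\alpha$ explicitly and uses $\bigl||x|^2-|\alpha|^2\bigr|\lesssim(|\alpha|-r_0)+|x'|\omega(|x'|)$ together with $|x_n-|\alpha||\le|x-\alpha|$ to get $|\partial_{x_n}h_\alpha(x',s)|\lesssim|x-\alpha|^{-n}+\bigl[(|\alpha|-r_0)+|x'|\omega(|x'|)\bigr]|x-\alpha|^{-n-1}$; splitting $\{|x'|<|\alpha|-r_0\}$, where $|x-\alpha|\gtrsim|\alpha|-r_0$, from $\{|x'|\ge|\alpha|-r_0\}$, where $|x-\alpha|\ge|x'|$, the integral in (b) is $\lesssim\omega(|\alpha|-r_0)+\int_0^\delta\omega(s)s^{-1}\,ds$, whose $\limsup$ as $\alpha\searrow z$ is $\lesssim\int_0^\delta\omega(s)s^{-1}\,ds\to0$. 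Collecting (a)--(b) produces an admissible $\eta(\delta)$.

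I expect the only genuine difficulty to be the last paragraph: the uniform control of $\nabla h_\alpha$ as the pole $\alpha$ slides onto the boundary point $z$. The delicate point is that the dangerous term in $\partial_{x_n}h_\alpha$ carries the factor $|x|^2-|\alpha|^2$, which is $O(|x'|\omega(|x'|)+(|\alpha|-r_0))$ rather than $O(\delta)$ \emph{only} because $\partial D$ is tangent to $\partial B$ at $z$, and the resulting integrand is borderline, saved exactly by the Dini condition; balancing these two smallness effects against the singularity of the kernel, uniformly in the approaching pole, is the crux. The rest — the reductions, the far-field uniform convergence $h_\alpha\to h_z$, and the bookkeeping of the symmetric-difference errors — is routine.
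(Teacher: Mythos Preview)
Your proof is correct, and it takes a genuinely different route from the paper's. The paper fixes $z\in T(\partial D,0)$, writes $\partial D$ near $z$ as a graph over the tangent hyperplane, and computes $\int_{\partial D\cap U_R}h_\alpha\,d\sigma$ directly: after passing to polar coordinates in $\mathbb{R}^{n-1}$ and rescaling $\rho=(r-r_0)s$, the main piece $I_\alpha^{(1)}$ is shown to converge to $-|\partial B|$ via the special-function identity $2\sigma_{n-1}\int_0^\infty s^{n-2}(s^2+1)^{-n/2}\,ds=\sigma_n$, while the remainders $I_\alpha^{(2)}$ and $J_\alpha$ are bounded by $c\int_0^R\omega(s)s^{-1}\,ds$ and $cR$ respectively; letting $R\to 0$ finishes. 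Your idea is more structural: writing $\int_{\partial D}k_\alpha=(|\partial D|-|\partial B|)+R_\alpha$ by means of the mean-value identity $\int_{\partial B}k_\alpha=0$ makes the target constant $|\partial B|$ appear for free rather than from a beta-function computation, and reduces the task to $\liminf_{\alpha\searrow z}R_\alpha\ge 0$. The far-field part then uses the pointwise limit $h_\alpha\to h_z\ge 0$ on $\partial D$ (and $h_z\equiv 0$ on $\partial B$), and the local part compares the two nearby graphs $\phi_D,\phi_B$ via the mean-value inequality for $\partial_{x_n}h_\alpha$. That local estimate lands on precisely the same Dini-controlled quantity $\int_0^\delta\omega(s)s^{-1}\,ds$ that the paper meets in its term $I_\alpha^{(2)}$; this borderline integral is the unavoidable heart of the matter, and you identify it correctly. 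Your approach also exploits the uniform $L^1$-bound $\int_{\partial B}|h_\alpha|\,d\sigma=|\partial B|$ (again a consequence of the mean value property, since $h_\alpha<0$ on $\partial B$) to dispose of the Jacobian and symmetric-difference errors, an elegant touch absent from the paper. In short: the paper performs a single-surface computation with an explicit limit identified via \eqref{e:Bessel}; you perform a two-surface comparison driven by the mean value property on $\partial B$. Both arguments hinge on the Dini condition at exactly the same place.
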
 
Since $B\subseteq D$ from the isoperimetric inequality we obtain 
\begin{equation} 
	|\partial D|-|\partial B| 
	\ge  n\omega_n^{\frac{1}{n}}\big(|D|^{\frac{n-1}{n}}-|B|^{\frac{n-1}{n}}\big)
	\ge \frac{(n-1)\omega_n^{\frac{1}{n}}}{|D|^{\frac{1}{n}}}|D\setminus B|,
	\label{e:1.7}\end{equation}
 where in the last inequality we applied the Lagrange mean value theorem to $t\mapsto t^{\frac{n-1}{n}}$.
 
 Hence $|\partial D|\ge |\partial B|$ and, as a first consequences of Theorem \ref{t:cuplansurf}, we have the following 
  solid stability inequality.
  \begin{corollary}\label{c:1.3}
  	Let  $D$,  $x_0$ and $B$ as in Theorem \ref{t:cuplansurf}. Then   
 \begin{equation}
 	\label{e:1.8}
 	\mathcal{K}(\partial D,x_0)\ge \frac{(n-1)\omega_n^{\frac{1}{n}}}{|D|^{\frac{1}{n}}}\frac{|D\setminus B|}{|\partial D|}.
 \end{equation}   \end{corollary}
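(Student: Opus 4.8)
The plan is to obtain \eqref{e:1.8} by simply combining the sharp surface stability inequality \eqref{e:1.6} of Theorem \ref{t:cuplansurf} with the isoperimetric comparison \eqref{e:1.7}, which has in fact already been recorded in the excerpt; no new idea is required, so the whole argument reduces to a short bookkeeping step. First I would observe that if $T(\partial D,x_0)=\emptyset$ then $\mathcal{K}(\partial D,x_0)=+\infty$ by \eqref{d:Kurangap0} and \eqref{e:1.8} holds trivially, while if $T(\partial D,x_0)\ne\emptyset$ one invokes Theorem \ref{t:cuplansurf}; in any case \eqref{e:1.6} is available unconditionally, so there is nothing to worry about on domains with empty touching set.

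Next I would spell out \eqref{e:1.7} carefully. Since $B\subseteq D$, the isoperimetric inequality gives $|\partial D|\ge n\omega_n^{1/n}|D|^{(n-1)/n}$, whereas $|\partial B|= n\omega_n^{1/n}|B|^{(n-1)/n}$ because balls are isoperimetric minimizers; subtracting these two relations yields the first inequality in \eqref{e:1.7}. For the second inequality I would apply the Lagrange mean value theorem to the concave function $t\mapsto t^{(n-1)/n}$ on the interval $[|B|,|D|]$ (the case $|B|=|D|$, i.e. $D=B$, being trivial since then $|D\setminus B|=0$): there is $\xi\in(|B|,|D|)$ with
\[
|D|^{\frac{n-1}{n}}-|B|^{\frac{n-1}{n}}=\frac{n-1}{n}\,\xi^{-\frac1n}\big(|D|-|B|\big)\ \ge\ \frac{n-1}{n}\,|D|^{-\frac1n}\big(|D|-|B|\big),
\]
the last estimate following from $\xi<|D|$ and the monotonicity of $s\mapsto s^{-1/n}$; finally $|D|-|B|=|D\setminus B|$ since $B\subseteq D$. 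This establishes \eqref{e:1.7}.

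Chaining \eqref{e:1.6} and \eqref{e:1.7} then gives
\[
\mathcal{K}(\partial D,x_0)\ \ge\ \frac{|\partial D|-|\partial B|}{|\partial D|}\ \ge\ \frac{(n-1)\omega_n^{1/n}}{|D|^{1/n}}\,\frac{|D\setminus B|}{|\partial D|},
\]
which is exactly \eqref{e:1.8}. I do not expect any genuine obstacle in this corollary: all the substance is concentrated in Theorem \ref{t:cuplansurf}, and the present step is purely elementary, the only minor point being to keep the convention $\mathcal{K}=+\infty$ in mind so that the inequality is never read as a vacuous statement.
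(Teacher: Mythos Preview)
Your proposal is correct and follows exactly the paper's approach: the corollary is obtained by chaining the surface stability inequality \eqref{e:1.6} with the isoperimetric comparison \eqref{e:1.7}, the latter being justified via the isoperimetric inequality and the Lagrange mean value theorem applied to $t\mapsto t^{(n-1)/n}$. You have in fact spelled out the derivation of \eqref{e:1.7} in more detail than the paper does, and your handling of the trivial case $T(\partial D,x_0)=\emptyset$ is consistent with how the paper treats it.
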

  Inequalities \eqref{e:1.6} and  \eqref{e:1.8} allow us to say that if 
   $\mathcal{K}(\partial D,x_0)$ is {\em small} then 
   \begin{center}{
   	 $D$ is {\em close} to the ball $B$}\end{center}
   \begin{center}{and}\end{center}\begin{center}  {$|\partial D|$ is {\em close} to $|\partial B|$. }
   \end{center}
  We want to stress that inequality \eqref{e:1.6} and its consequence  \eqref{e:1.8} do not require the smallness of the Kuran gap.
  
  Our method to prove Theorem \ref{t:cuplansurf} (see Section \ref{sec:2}) is direct and does not require the profound real and harmonic analysis techniques used by Lewis and Vogel, and by Preiss and Toro, in their papers quoted above.

  \medbreak

  $\bullet$ {\sc On the sharpness of  our  stability inequality.}

  In Theorem \ref{t:1.1} 
 we prove that in $\mathbb{R}^n$, $n\ge 2$, there exists a family $(\hat D(\varepsilon))_{0<\varepsilon<\varepsilon_0}$ of Lipschitz-continuous domains containing 
  	the origin,
  	such that, for every $\varepsilon$, $0<\varepsilon<\varepsilon_0$,  
  $B(0,1)$ is the biggest ball centered at $0$ and  contained in  
  $\hat D(\varepsilon)$, 	
  	\[B(0,1)\subseteq \hat D(\varepsilon)\subseteq B(0,1+\varepsilon)\] 
  	  	 and  \[\frac{1}{c}\varepsilon^{n-1}\le |\partial \hat D(\varepsilon)|-|\partial B(0,1)|\le c\,\varepsilon^{n-1}\]
   	where $c$ is an absolute constant.
  
  On the other hand, see  \eqref{e:nuova}, we have   \begin{equation}\label{e:sharp1}\frac{1}{c}\varepsilon^{n-1}
  \le  	\mathcal{K}(\partial \hat D(\varepsilon),0)\le c\,\varepsilon^{n-1}.\end{equation}  These inequalities, together with  \eqref{e:1.6},    imply  
   \begin{equation}\label{e:sharp2}\frac{|\partial \hat D(\varepsilon)|-|\partial B(0,1)|}{|\partial \hat D(\varepsilon)|} \le \mathcal{K}(\partial \hat D(\varepsilon),0)\le c\frac{|\partial \hat D(\varepsilon)|-|\partial B(0,1)|}{|\partial \hat D(\varepsilon)|}. \end{equation} 
 Estimates  \eqref{e:sharp1} and  \eqref{e:sharp2} prove that our stability inequality is sharp.

  \medbreak

  $\bullet$ {\sc On our regularity assumption on the domains.} 
  
By  the very  definition of Kuran gap, our stability inequality is not trivial   only if  the touching set $T(\partial D,x_0)$ is not empty, because if this is not the case then $\mathcal{K}(\partial D,x_0)=\infty$,  see \eqref{d:Kurangap0}. This requires that the boundary of $D$ is  Lyapunov-Dini   {\em  at one    point} of $\partial D$ closest to $x_0$. 
 Then we require a quite strong assumption at a single point of $\partial D$, but we do not require any global - even weak - regularity property of the boundary. In particular, differently than   in  Lewis-Vogel's and Preiss-Toro's papers, we do not assume neither the   Dirichlet regularity of $D$ nor the 
 {\em  at most Euclidean growth}  \eqref{EG}.
%

%
%
%
%
%

  \subsection{A comparison between $\mathcal{G}$ and $\mathcal{K}$ gaps}\label{ss:confrontoGK}
  In Section \ref{s:idem}  we will compare the Kuran gap with the surface Gauss gap. Precisely, if $D\subseteq \mathbb{R}^n$ is a bounded open set containing $x_0$ with $|\partial D|<\infty$ and such that $T(\partial D,x_0)\ne \emptyset$, then we will prove that 
  \begin{equation}
  	\label{e:G>=K}
  	\mathcal{G}(\partial D,x_0)\ge \frac{	\mathcal{K}(\partial D,x_0)}{1+h^*(\partial D,x_0)},
  \end{equation}
  where, if $x_0=0$, 
  \begin{equation}\label{e:hstar}h^*(\partial D,0):=\inf_{z\in T(\partial D,0)} L^*(z),\end{equation}
  being 
  \begin{equation}\label{e:defL*}L^*(z):=
   {\underset{\alpha\notin \overline{D}}{\limsup_{\alpha\searrow z}}}\fint_{\partial D}|h_{\alpha}|\,d\sigma,\end{equation}
   where $h_{\alpha}$ is the function in \eqref{e:kalpha}.  
     
 If  $x_0\ne 0$, $h^*$ is defined as follows
 \[h^*(\partial D,x_0):=h^*(\partial (-x_0+D),0).\]
 Then, obviously, $h^*$ is translations  invariant. 
 It is also easy to prove that $h^*$, like $\mathcal{G}$ and $\mathcal{K}$, is rotations and scaling invariant.

 In Section  \ref{s:idem}, see Proposition \ref{p:hstarfinito},  we will prove  that \begin{equation}\label{e:1.15Touch}h^*(\partial D,x_0)<\infty\quad \text{if $T(\partial D,x_0)\ne \emptyset$.}\end{equation}  
 Moreover, it is easy to recognize  that if $D$ is a ball centered at $x_0$ then $h^*(\partial D,x_0)=1$. 
 Indeed, for every $x_0\in \mathbb{R}^n$ and $r>0$, we have 
 \[h^*(\partial B(x_0,r),x_0)=
 h^*(\partial B(0,1),0).
 \]
   Then, since  $T(\partial B(0,1),0)=\partial B(0,1)$ and $h_{\alpha}\in \mathcal{H}(\overline{B(0,1)})$ for every $\alpha\notin \overline{B(0,1)}$, if $z\in \partial B(0,1)$ we have  
 \begin{align*}
 	L^*(z):=&
 	{\underset{\alpha\notin \overline{B(0,1)}}{\limsup_{\alpha\searrow z}}}
 	\fint_{\partial B(0,1)}|h_{\alpha}(x)|\,d\sigma(x)
 	\\ =& {\underset{\alpha\notin \overline{B(0,1)}}{\limsup_{\alpha\searrow z}}}
 	\fint_{\partial B(0,1)}|\alpha|^{n-2}\frac{|\alpha|^2-|x|^2}{|x-\alpha|^n}\,d\sigma(x)
 	\\ =& {\underset{\alpha\notin \overline{B(0,1)}}{\limsup_{\alpha\searrow z}}}
\left(- \fint_{\partial B(0,1)} h_{\alpha}(x)\,d\sigma(x)\right)
	\\ =& 	 {\underset{\alpha\notin \overline{B(0,1)}}{\limsup_{\alpha\searrow z}}}\left( -h_{\alpha}(0)\right)=1.
 \end{align*}

Finally, we notice that  from   Theorem \ref{t:cuplansurf} and \eqref{e:G>=K}   if $D\subseteq \mathbb{R}^n$ is a bounded open set containing $x_0$ with $|\partial D|<\infty$ and such that $T(\partial D,x_0)\ne \emptyset$, then the following estimate holds
\begin{equation}\label{e:CL1.4}\mathcal{G}(\partial D,x_0)\ge  \frac{	1}{1+h^*(\partial D,x_0)}\frac{|\partial D|-|\partial B|}{|\partial D|},\end{equation}
 where  $B$ is the biggest ball centered at $x_0$  contained in $D$. 

This inequality generalizes \eqref{1.4} to every domain  that has at least a Lyapunov-Dini point 
in the touching set, without assuming neither the Dirichlet regularity of the domain, nor the 
{\em at most Euclidean growth} \eqref{EG} or the {\em smallness} of the surface Gauss gap.

  \subsection{A sufficient condition for a harmonic pseudosphere to be a sphere} 
  From Corollary \ref{c:1.3} one easily obtains the following theorem.
  
  \begin{theorem}
  Let $D$ be a harmonic pseudosphere centered at $x_0$. If $T(\partial D,x_0)\ne \emptyset$ then $D$ is a Euclidean sphere centered at $x_0$. 	
\end{theorem}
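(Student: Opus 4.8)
The plan is to combine the solid stability inequality of Corollary \ref{c:1.3} with the fact, already recorded in the text preceding Remark \ref{r:aggiunto}, that the Kuran gap of a harmonic pseudosphere vanishes as soon as the touching set is nonempty.

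First I would observe that the hypothesis $T(\partial D,x_0)\neq\emptyset$ plays a double role. On the one hand it is exactly what prevents $\mathcal{K}(\partial D,x_0)$ from being $+\infty$ (see \eqref{d:Kurangap0}). On the other hand, it forces $\mathcal{K}(\partial D,x_0)=0$: for $\alpha\notin\overline D$ the $\alpha$-Kuran function $k_\alpha$ lies in $\mathcal{H}(\overline D)\subseteq \mathcal{H}(D)\cap C(\overline D)$, so the pseudosphere property \eqref{e:3-1.3} gives $k_\alpha(x_0)=\fint_{\partial D}k_\alpha\,d\sigma$; hence $L(z)=0$ for every $z\in T(\partial D,x_0)$, and therefore $\mathcal{K}(\partial D,x_0)=\inf_{z}L(z)=0$. (This is precisely the content of Remark \ref{r:aggiunto} applied with the full family $\mathcal{H}(D)\cap C(\overline D)$.)

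Next I would feed $\mathcal{K}(\partial D,x_0)=0$ into inequality \eqref{e:1.8}. Writing $B=B(x_0,r_{x_0})$ for the biggest ball centered at $x_0$ contained in $D$, and noting that $0<|D|<\infty$ (the set $D$ is bounded and contains the ball $B$) and $|\partial D|\ge|\partial B|>0$ (by \eqref{e:1.7}), the strictly positive factor $\frac{(n-1)\omega_n^{1/n}}{|D|^{1/n}}$ may be cancelled, leaving $|D\setminus B|=0$.

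Finally I would upgrade this null-measure identity to the set equality $D=B$, which is the only genuinely geometric step. We already have $B\subseteq D$. The set $D\setminus\overline B=D\cap(\mathbb{R}^n\setminus\overline B)$ is open and has Lebesgue measure at most $|D\setminus B|=0$; an open set of measure zero is empty, so $D\subseteq\overline B$, and since $D$ is open this gives $D\subseteq\operatorname{int}\overline B=B$. Hence $D=B$, so $\partial D=\partial B$ is the Euclidean sphere of radius $r_{x_0}$ centered at $x_0$. I do not anticipate any real obstacle: once Corollary \ref{c:1.3} is granted, the argument reduces to the vanishing of $\mathcal{K}$ for pseudospheres and this elementary openness remark.
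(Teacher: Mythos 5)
Your proposal is correct and follows essentially the same route as the paper: the vanishing of $\mathcal{K}(\partial D,x_0)$ for a pseudosphere with nonempty touching set, fed into Corollary \ref{c:1.3} to get $|D\setminus B|=0$, and then the openness argument to conclude $D=B$. The only difference is that you spell out the final topological step ($D\setminus\overline B$ open of measure zero hence empty, then $D\subseteq\operatorname{int}\overline B=B$), which the paper leaves as a one-line remark.
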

\begin{proof}
	We have already observed that the assumptions of this theorem imply 
	$\mathcal{K}(\partial D,x_0)=0$. Then, by inequality \eqref{e:1.8}, $|D\setminus B|=0$, where $B$ is the biggest ball centered at $x_0$ and contained in $D$. Since $D$ is open and $B$ is a ball, this implies $D=B$.
\end{proof}

\subsection{A new solution of the surface Aharonov-Schiffer-Zalcman's problem} 

Let $B$ be a Euclidean ball centered at $x_0$ and let $\Gamma$ be the fundamental 
solution of the Laplace equation. Then 
$x\mapsto \Gamma(x-y)$ is harmonic in $\overline{B}$ for every $y\notin \overline{B}$. As a consequence, by the Gauss's Theorem 
\[\fint_{\partial B}\Gamma(x-y)\,d\sigma(x)=\Gamma(x_0-y)
\qquad \forall y\notin \overline{B}.\]
  This identity is a {\em rigidity} property of the Euclidean sphere. 
  Indeed, as a by-product of Theorem \ref{t:cuplansurf}, we get the following result. 
  \begin{theorem}\label{t:1.5}
  	Let $D\subseteq \mathbb{R}^n$ be a bounded open set containing a point 
  	$x_0$ and such that $|\partial D|<\infty$. Assume that, for a suitable constant $c>0$, 
  	\begin{equation}
  		\label{e:1.9} 
  		\int_{\partial D}\Gamma(x-y)\,d\sigma(x)=c\,\Gamma(x_0-y)\qquad \forall y\notin \overline{D}.
  	\end{equation}    
  Then $c=|\partial D|$ and, if $T(\partial D,x_0)\ne \emptyset$, $D$ is a Euclidean ball centered at $x_0$.
  \end{theorem}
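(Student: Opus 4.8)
The plan is to reduce everything to Theorem~\ref{t:cuplansurf}, and more precisely to Corollary~\ref{c:1.3}, by showing that hypothesis \eqref{e:1.9}---once the constant is identified---forces the Kuran gap of $\partial D$ with respect to $x_0$ to vanish. After a translation we may assume $x_0=0$. The first, easy, step is the identification $c=|\partial D|$. Since $D$ is bounded, every $y$ with $|y|$ large lies outside $\overline D$, and for $x$ ranging in the fixed compact set $\partial D$ one has $|x-y|/|y|\to 1$ as $|y|\to\infty$; hence $\Gamma(x-y)/\Gamma(-y)\to 1$ uniformly in $x\in\partial D$ (for $n\ge 3$ because $\Gamma(z)$ is a fixed multiple of $|z|^{2-n}$, and for $n=2$ because $\Gamma(z)=-\tfrac1{2\pi}\log|z|$ and $\log|x-y|/\log|y|\to 1$). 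Dividing \eqref{e:1.9} by $\Gamma(-y)$ and letting $|y|\to\infty$ yields $c=|\partial D|$, so that \eqref{e:1.9} becomes the surface mean value identity
\[
\fint_{\partial D}\Gamma(x-y)\,d\sigma(x)=\Gamma(-y)\qquad\text{for all }y\notin\overline D .
\]

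The heart of the argument is to deduce from this identity that $\fint_{\partial D}k_\alpha\,d\sigma=0=k_\alpha(0)$ for every $\alpha\notin\overline D$, and hence, by the definitions \eqref{d:Kurangap0}--\eqref{d:Lz}, that $\mathcal K(\partial D,0)=0$ whenever $T(\partial D,0)\ne\emptyset$. To this end I would first record the elementary algebraic identity $|x|^2-|\alpha|^2=|x-\alpha|^2+2\,\alpha\cdot(x-\alpha)$, which, together with the fact that $(x-\alpha)/|x-\alpha|^n$ is a fixed multiple of the pole-gradient $\nabla_\alpha\Gamma(x-\alpha)$, lets me write $h_\alpha$ (hence $k_\alpha=1+h_\alpha$) as an affine-linear combination, with coefficients depending on $\alpha$ but not on $x$, of the constant function $1$, of $\Gamma(x-\alpha)$, and of the components of $\nabla_\alpha\Gamma(x-\alpha)$. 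Concretely, for $n\ge 3$ one gets
\[
h_\alpha(x)=(n-2)\sigma_n|\alpha|^{n-2}\,\Gamma(x-\alpha)+2\sigma_n|\alpha|^{n-2}\,\alpha\cdot\nabla_\alpha\Gamma(x-\alpha),
\]
with the obvious logarithmic variant for $n=2$ (in which case the constant term $1$ is genuinely present).

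Next I would average this representation over $\partial D$. Averaging commutes with the affine-linear combination, so all that is needed is the surface mean value property for the three building blocks: for the constant $1$ it is trivial, for $\Gamma(\,\cdot\,-\alpha)$ it is precisely the identity displayed above, and for $\nabla_\alpha\Gamma(\,\cdot\,-\alpha)$ it follows by differentiating that identity with respect to the pole. The differentiation is legitimate because both sides are harmonic---in particular smooth---functions of $y$ on the open set $\R^n\setminus\overline D$, so one may differentiate under the integral sign. Carrying this out, $\fint_{\partial D}h_\alpha\,d\sigma$ equals the same affine-linear combination evaluated at $x=0$, i.e. $h_\alpha(0)$, and $h_\alpha(0)=|\alpha|^{n-2}(-|\alpha|^2)/|\alpha|^n=-1$. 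Hence $\fint_{\partial D}k_\alpha\,d\sigma=1+(-1)=0=k_\alpha(0)$ for every $\alpha\notin\overline D$, so $L(z)=0$ for each $z\in T(\partial D,0)$ and therefore $\mathcal K(\partial D,0)=0$. Finally, Corollary~\ref{c:1.3} gives $0=\mathcal K(\partial D,0)\ge \frac{(n-1)\omega_n^{1/n}}{|D|^{1/n}}\frac{|D\setminus B|}{|\partial D|}\ge 0$, so $|D\setminus B|=0$; since $D$ is open and $B$ is the biggest ball centered at $0$ contained in $D$, this forces $D=B$, i.e. $D$ is a Euclidean ball centered at $x_0$.

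I expect the only genuinely delicate points to be: (a) the passage from the mean value property for $\Gamma(\,\cdot\,-\alpha)$ to the one for its pole-gradient $\nabla_\alpha\Gamma(\,\cdot\,-\alpha)$, which rests on the harmonicity in $y$ of both sides on $\R^n\setminus\overline D$ (the same observation underlying the very definition of the Kuran gap); and (b) keeping the bookkeeping clean across the two regimes $n\ge 3$ and $n=2$, both in the asymptotic computation of $c$ and in the representation of $h_\alpha$. Everything else amounts to the classical mean value theorem on the ball $B$ together with the elementary algebra above, so no deep harmonic analysis is required, in keeping with the rest of the paper.
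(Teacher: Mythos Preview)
Your proof is correct, but the route to $\mathcal K(\partial D,x_0)=0$ differs from the paper's. The paper first establishes a general lemma (Lemma~\ref{l:preT1.5}): identity \eqref{e:1.9} with $c=|\partial D|$ is \emph{equivalent} to the surface mean value property $\fint_{\partial D}u\,d\sigma=u(x_0)$ for every $u\in\mathcal H(\overline D)$. The nontrivial implication is obtained by writing any such $u$ (after multiplying by a cutoff $\varphi\in C_0^\infty$) via the fundamental solution, $u\varphi=-\int_{\R^n}\Delta(u\varphi)(y)\,\Gamma(\cdot-y)\,dy$, and then swapping integrals; since $\Delta(u\varphi)$ is supported outside $\overline D$, the hypothesis applies to the inner integral. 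Remark~\ref{r:aggiunto} then gives $\mathcal K=0$ and Corollary~\ref{c:1.3} finishes the job exactly as you do. Your alternative is to forgo the general statement and check the mean value property \emph{only} for the Kuran functions, by the explicit decomposition $h_\alpha(x)=|\alpha|^{n-2}|x-\alpha|^{2-n}+2|\alpha|^{n-2}\alpha\cdot(x-\alpha)|x-\alpha|^{-n}$ and one differentiation of \eqref{e:1.9} in the pole variable. This is more elementary (no cutoff, no convolution representation, no Fubini over $\R^n$) and perfectly sufficient for the conclusion, since the definition of $\mathcal K$ only involves the $k_\alpha$'s; the paper's version, on the other hand, yields the stronger intermediate fact that $\partial D$ satisfies the mean value identity for the whole of $\mathcal H(\overline D)$, which is of independent interest (cf.\ Remark~\ref{r:aggiunto}). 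The identification $c=|\partial D|$ and the final step via Corollary~\ref{c:1.3} are carried out identically in both arguments.
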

  We will prove this theorem in Section \ref{ProofASZ}. Here we only want to stress that Theorem \ref{t:1.5} gives an answer to the following  
   physics problem, whose gravitational version for solid {\em potato} bodies was posed and solved by Aharonov-Schiffer-Zalcman in \cite{ASZ} (see also \cite{cuplan}).

    Let $D$ be a conductor body. Suppose that a uniform distribution of electric charges on $\partial D$ generates a potential proportional, outside $\overline{D}$, to the potential of a single charge at a point $x_0\in D$. Then:
  \begin{center}
  	is it $D$ a ball centered at $x_0$?
  \end{center} 
  The answer is yes, if $T(\partial D,x_0)\ne \emptyset$, as Theorem \ref{t:1.5} shows.
  
  \medbreak
  We  close this introduction with the following    bibliographical 
 note.  In recent years several proofs of Theorem 
   \ref{t:1.5} for $C^2$-domains have appeared in literature. However, as far as we know, the first proof of  Theorem \ref{t:1.5} for $C^1$-domains is basically  already contained in  Fichera's paper  \cite{Fichera}. 
    Indeed the main theorem in \cite{Fichera} can be rephrased as follows: {\em ``A pseudosphere of class $C^1$ is a Euclidean sphere''}.
   However, by carefully reading Fichera's proof one easily realizes that, actually, Fichera proves the following theorem:
   {\em``Let $D$ be a bounded open set containing a point $x_0$ and such that $\partial D $ is of class $C^1$. Then $\partial D$
   is a Euclidean sphere centered at $x_0$ if identity \eqref{e:1.9}  is satisfied with $c = |\partial D|$.''}
		
		\medbreak
		The plan of the paper is the following:
		in Section \ref{sec:2} we prove Theorem \ref{t:cuplansurf}, in Section 
		\ref{s:idem} we do a  comparison between the surface Gauss gap $\mathcal{G}$ and 
		the surface Kuran gap  
		$\mathcal{K}$,   in Section \ref{s:palla-col-becco} we prove  Theorem \ref{t:1.1}, 
 in Section \ref{ProofASZ} we prove Theorem \ref{t:1.5}. We conclude the paper with an 
Appendix (Section \ref{s:appendix}) in which we 
show that  the Preiss and Toro's stability result in \cite{PT} can be expressed in terms of the surface Gauss gap.

\section{Proof of Theorem \ref{t:cuplansurf}}
\label{sec:2}

Let $D$ be a  bounded open subset of $\mathbb{R}^n$ 
containing $x_0$ and with $|\partial D|<\infty$. 
It suffices to prove the theorem under the assumptions $x_0=0$ and  $T(\partial D,x_0)\ne \emptyset$.  

Our proof of Theorem \ref{t:cuplansurf} requires several preliminaries, which we will develop in the  subsections below.
 
\medbreak

{\bf STEP 1. The boundary of $D$ near a point  $z\in T(\partial D,0)$.}

Throughout the proof we split  $\mathbb{R}^{n}$ as $\mathbb{R}^{n-1}\times \mathbb{R}$ and   denote a point $x\in \mathbb{R}^{n}$ as   \[x=(y,t)\quad \text{with} \ y\in \mathbb{R}^{n-1}, \ t\in \mathbb{R}. \]
 
%

 Fix $z\in T(\partial D,0)$.
 Since $\mathcal{K}(\partial D,0)$ is rotation invariant,  we can assume   $z=(0,r_0)\in \mathbb{R}^{n-1}\times \mathbb{R}$, where $r_0=\operatorname{dist}(\partial D,x_0)$. 
 
 \medbreak
 Since $D$ is a Lyapunov-Dini domain  at $z$, 
 the following properties hold:
 
 \begin{itemize}
  \item[(i)] 
 there exists a rectangular neighborhood of $z$
 \[U_R:=\hat{B}_R\times ]a,b[\]
 where $a<r_0<b$ and
 \[\hat{B}_R=\{y\in \mathbb{R}^{n-1}\,:\, |y|<R\}\] for some $R>0$,
 \item[(ii)] there exists a function $\Phi\in C^1(\hat{B}_R;]a,b[)$ such that 
\begin{itemize}
  \item[(a)] 
  $\partial D\cap U_R=\{(y,\Phi(y))\,:\, y\in \hat{B}_R\}$
  \item[(b)] 
  $D\cap U_R=\{(y,t)\,:\, y\in \hat{B}_R, \ a<t<\Phi(y)<b\}$ 
 \item[(c)] $z=(0,r_0)=(0,\Phi(0))$, $\nabla \Phi(0)=0$ and 
\begin{equation} \label{e:nablaPhi}|\nabla \Phi(y)|\le \omega(|y|)\quad \forall y\in \hat{B}_R\end{equation}
  where $\omega$ is a Dini-function. In particular, $\lim_{s\to 0}\omega(s)=0$.
  \end{itemize}
    \end{itemize}

From now on, we consider $\alpha\in \mathbb{R}^n$, 
\begin{equation}\label{e:alpha}\alpha=(0,r)\quad \text{  with $r_0<r<b$}.\end{equation} In particular, 
$\alpha\in U_R\setminus \overline{D}$, so that $k_{\alpha}$ and $h_{\alpha}$ are harmonic in 
a neighborhood of $\overline{D} $. Moreover, 
\begin{equation}\label{e:alphatobarx}
	\alpha\searrow z, \ \alpha\notin  \overline{D} \quad \text{ iff}\ \ r\to  r_0.\end{equation} 

%
%

\medbreak

{\bf STEP 2.}

The function $(x,\alpha)\mapsto k_{\alpha}(x)$ is smooth in  $(\mathbb{R}^n\times\mathbb{R}^n) \setminus
 \{x= \alpha\}$ so that 
\begin{equation}\label{e:halphastima}\sup_{\partial D\setminus U_R}|k_{\alpha}|<\infty\end{equation}
for every $\alpha$ sufficiently close to $z$. Then, by the dominated convergence theorem, 
\[
{\underset{\alpha\notin \overline{D}}{\lim_{\alpha\searrow z}}}\ 
\int_{\partial D\setminus U_R}
k_{\alpha}\,d\sigma=
\int_{\partial D\setminus U_R}\ 
{\underset{\alpha\notin \overline{D}}{\lim_{\alpha\searrow z}}}\ k_{\alpha}\,d\sigma=
\int_{\partial D\setminus U_R}
k_{z}\,d\sigma.
\]
On the other hand if $x\in \partial D\setminus U_R$ 
then $x\notin B(0,r_0) \ (=
B(0,|z|)) 
$ and $x\ne z$, so that 
$|x|\ge |z|$ and 
\[k_{z}(x)=1+|z|^{n-2}
\frac{|x|^2-|z|^{2}}{|x-z|^{n}}\ge 1.\]
Then 
\begin{equation}\label{e:stimaliminthalpha}
	{\underset{\alpha\notin \overline{D}}{\lim_{\alpha\searrow z}}}\ \int_{\partial D\setminus U_R}
k_{\alpha}\,d\sigma\ge 
|\partial D\setminus U_R|.
\end{equation}

{\bf STEP 3.}

In order to estimate
 \begin{equation}\label{e:Mat9(pre4)}
 \int_{\partial D\cap U_R}
k_{\alpha}\,d\sigma,\end{equation}
for $\alpha$ close to $z$, we first remark that, by definition of $k_\alpha$,  
\begin{equation}\label{e:Mat9(4)}\int_{\partial D\cap U_R}k_\alpha\,d\sigma=
|\partial D\cap U_R|+\int_{\partial D\cap U_R}h_\alpha\,d\sigma.\end{equation}
                                                            
Moreover, keeping in mind that 
$\partial D\cap U_R$ is the graph of the function $\Phi$,
\begin{equation}\label{e:Mat9(5)}
\int_{\partial D\cap U_R}h_\alpha\,d\sigma=
\int_{\{|y|<R\}}h_\alpha(y,\Phi(y))\, N(y)\,dy,\end{equation}
where \begin{equation}\label{e:N}N(y)=\sqrt{1+|\nabla \Phi(y)|^2}.\end{equation} 


 We recall that  every $x\in \partial D\cap U_R$ can be written as $x=(y,\Phi(y))$, with $y\in \mathbb{R}^{n-1}$, $|y|<R$, so that, since 
 $\alpha=(0,r)$, \[x-\alpha=(y,\Phi(y)-r).\]
 Therefore, keeping in mind \eqref{e:kalpha} and \eqref{e:Mat9(5)},
 \begin{equation}\label{e:Mat9(5bis)}
 \int_{\partial D\cap U_R}h_\alpha\,d\sigma=
-r^{n-2}\int_{\{|y|<R\}}
\frac{r^2-\left(|y|^2+(\Phi(y))^2\right)}{\left(|y|^2+(\Phi(y)-r)^2\right)^{\frac{n}{2}}}\, N(y)\,dy.\end{equation}

{\bf STEP 4.}

We now consider the last integral in \eqref{e:Mat9(5bis)}. 

By using the polar coordinates that integral is equal to   
 \begin{align*}
&  \int_{0}^R\left(\int_{\{|y|=\rho \}}
\frac{\left(r^2-(\Phi(y))^2\right)-\rho^2}{\left(\rho^2+(\Phi(y)-r)^2\right)^{\frac{n}{2}}}\, N(y)\,d\sigma(y)\right)d\rho
\\ &\quad =\text{(letting $y=\rho\eta$, \ $\eta\in \mathbb{R}^{n-1}, \ |\eta|=1$)} 
\\ & \qquad \int_{0}^R\rho^{n-2}\left(\int_{\{|\eta|=1 \}}
\frac{\left(r^2-(\Phi(\rho\eta))^2\right)-\rho^2}{\left(\rho^2+(\Phi(\rho\eta)-r)^2\right)^{\frac{n}{2}}}\, N(\rho\eta)\,d\sigma(\eta)\right)d\rho.
 \end{align*}
By changing the integration order and letting $\rho=(r-r_0)s$, the right hand side is equal to 
 \begin{align*}
 \int_{\{|\eta|=1 \}}\Big(\int_{0}^{\frac{R}{r-r_0}}\frac{s^{n-2}}
{\Big(s^2+\left(\frac{\Phi-r}{r-r_0}\right)^2\Big)^{\frac{n}{2}}}\left(\frac{r^2-\Phi^2}{r-r_0}
-s^2(r-r_0)\right)N\,ds\Big)\,d\sigma(\eta)
 \end{align*}
where 
\[\Phi:=\Phi((r-r_0)s\eta),\qquad  N:=N((r-r_0)s\eta).\]

Letting \[I_{\alpha}:=
-r^{n-2}\int_{\{|\eta|=1 \}}\Big(\int_{0}^{\frac{R}{r-r_0}}\frac{s^{n-2}}
{\Big(s^2+\left(\frac{\Phi-r}{r-r_0}\right)^2\Big)^{\frac{n}{2}}}\frac{r^2-\Phi^2}{r-r_0}N\,ds\Big)\,d\sigma(\eta)
\]
and 
\begin{equation}\label{e:Jalpha}J_{\alpha}:=
r^{n-2}\int_{\{|\eta|=1 \}}\Big(\int_{0}^{\frac{R}{r-r_0}}\frac{s^{n}}
{\Big(s^2+\left(\frac{\Phi-r}{r-r_0}\right)^2\Big)^{\frac{n}{2}}}
(r-r_0)N\,ds\Big)\,d\sigma(\eta),\end{equation}
by \eqref{e:Mat9(5bis)} we have 
\begin{equation}\label{e:Mat11(11)}
 \int_{\partial D\cap U_R}h_{\alpha}\,d\sigma=I_{\alpha}+J_{\alpha}.
\end{equation}

{\bf STEP 5.}

To study the behavior of $I_{\alpha}$ 
 as $\alpha\to z$ (i.e. as $r\to r_0$), in this step, recalling that \[\Phi=\Phi((r-r_0)s\eta),\] we analyze the behavior of 
\[\frac{r^2-\Phi^2}{r-r_0}\qquad \text{and}\qquad  
\frac{\Phi-r}{r-r_0}\]
as $r\to r_0$.

We have 
\[\frac{r^2-\Phi^2}{r-r_0}=\frac{r-\Phi}{r-r_0}(r+\Phi).\]

%
%
%
%
%
%
%
%
%
%
%
%
%

For the mean value theorem, 
\[\frac{r-\Phi}{r-r_0}=1+\frac{r_0-\Phi}{r-r_0}=
 1-\frac{\Phi-\Phi(0)}{r-r_0}
 =1-s\langle \nabla \Phi(\zeta),\eta\rangle,
 \]
 where $\zeta=\tau(r-r_0)s\eta$ for a suitable $\tau\in ]0,1[$.
 
 Denoting 
 \[\Sigma=\Sigma(\zeta,\eta):=-\langle \nabla \Phi(\zeta),\eta\rangle,\]
 we get \[\frac{r^2-\Phi^2}{r-r_0}=(r+\Phi) +(r+\Phi)s\Sigma.\]


 Recalling that \[N=N(s(r-r_0)\eta)=\sqrt{1+|\nabla \Phi(s(r-r_0)\eta)|^2},\]
 we get 
 \[\sup_{s\in [0,\frac{R}{r-r_0}]}N(s(r-r_0)\eta)\le 
  \sqrt{1+\|\nabla \Phi\|_{L^{\infty}(\hat{B}_R)}^2} \]
and, by $\nabla \Phi(0)=0$,   
\[N=N(s(r-r_0)\eta)=1+o(1)\quad \text{as $r\to r_0$,}\]
 uniformly w.r.t.  $\eta\in \mathbb{R}^{n-1}$ with $|\eta|=1$, and 
 $s\in [0,a]$ for any fixed $a>0$.  
 
 It is crucial to remark that, being $D$ a Lyapunov-Dini domain at $z$ with 
a  Dini-continuity modulus $\omega$ satisfying  \eqref{e:nablaPhi}, 
\begin{equation}
 \label{e:MT/14(11)}
 |\Sigma|=|\Sigma (\zeta,\eta)|\le |\nabla \Phi(\zeta)||\eta|\le \omega(|\zeta|)=\omega(s(r-r_0)).
\end{equation}
In particular, by the monotonicity of $\omega$, 
\begin{equation}
 \label{e:MT/14(11bis)}
 |\Sigma|\le \omega(R)\qquad \forall s\in \big[0,\frac{R}{r-r_0}\big].
\end{equation}

With these estimates at hand, it is convenient to split 
$I_{\alpha}$ as follows:
\begin{equation}
 \label{e:MT/13(7)}
 I_{\alpha}=I_{\alpha}^{(1)}+I_{\alpha}^{(2)},
\end{equation}
with 
\begin{equation}
 \label{e:MT/13(8)}I_{\alpha}^{(1)}:=
-r^{n-2}\int_{\{|\eta|=1 \}}\Big(\int_{0}^{\frac{R}{r-r_0}}\frac{s^{n-2}}
{\Big(s^2+(1+s\Sigma)^2\Big)^{\frac{n}{2}}}(r+\Phi)N\,ds\Big)\,d\sigma(\eta)
\end{equation}
and 
\begin{equation}
 \label{e:MT/13(9)}I_{\alpha}^{(2)}:=
-r^{n-2}\int_{\{|\eta|=1 \}}\Big(\int_{0}^{\frac{R}{r-r_0}}\frac{s^{n-1}}
{\Big(s^2+(1+s\Sigma)^2\Big)^{\frac{n}{2}}}(r+\Phi)\,\Sigma\, N\,ds\Big)\,d\sigma(\eta).
\end{equation}

{\bf STEP 6.}

In this step we show that 

\begin{equation}\label{e:MT/14Ialpha1}
{\underset{\alpha\notin \overline{D}}{\lim_{\alpha\searrow z}}}\  I_{\alpha}^{(1)}=-|\partial B|,
\end{equation}
where  $B=B(0,r_0)$ is  the Euclidean ball  in $\mathbb{R}^n$ with center $0$ and radius $r_0$.

This is obtained by passing   to the limit under the integral sign. Let us show that this is possible. 

\medbreak

By \eqref{e:alpha} and \eqref{e:MT/14(11bis)} there exists $0<\delta<\frac{R}{b-r_0}<\frac{R}{r-r_0}$ such that 
\[|s\Sigma|\le \frac{1}{2}\qquad \forall s\in [0,\delta].\]

Since  the absolute value of the function $r\mapsto (r+\Phi)N$, for every  $r$ close to $r_0$,  for instance $r_0<r<2r_0$, is bounded by the 
 positive constant  $C$ 
\begin{equation}\label{e:CR}C=C(R)=\sup_{|y|\le R}\big(2r_0+|\Phi|)N,\end{equation}
we get 
\[
\frac{s^{n-2}}
{\Big(s^2+(1+s\Sigma)^2\Big)^{\frac{n}{2}}}|r+\Phi|N \le \left\{\begin{array}{ll}
 C2^{n}s^{n-2} & \text{if $s\in [0,\delta]$}
\\ \\
 \displaystyle \frac{C}{s^2}& \text{if $s\in \big]\delta,\frac{R}{r-r_0}\big[$.}
\end{array}\right.
\]
This estimate allows to pass   to the limit under the integral sign.

By \eqref{e:MT/14(11)}, the definition of $N$, see \eqref{e:N}, and recalling that  $\Phi(0)=r_0$, $\nabla\Phi(0)=0$, we get 
\[\Sigma\to 0, \quad (r+\Phi)\to 2r_0,\quad w\to 1\qquad \text{as $r\to r_0$} \]
Therefore 
\[{\underset{\alpha\notin \overline{D}}{\lim_{\alpha\searrow z}}}\  I_{\alpha}^{(1)}=
-2r_{0}^{n-1}\sigma_{n-1}\int_0^{\infty}\frac{s^{n-2}}{(s^2+1)^{\frac{n}{2}}}\,ds
\]
where $\sigma_{n-1}$ is the area of the unit sphere of $\mathbb{R}^{n-1}$. 

On the other hand,
\begin{equation}\label{e:Bessel}2\sigma_{n-1}\int_0^{\infty}\frac{s^{n-2}}{(s^2+1)^{\frac{n}{2}}}\,ds=\sigma_n,\end{equation}                                                                                                            where $\sigma_{n}$ is the area of the unit sphere of $\mathbb{R}^{n}$ (see  Remark \ref{r:misura} below). 
Then
\begin{equation}\label{e:MT/14(11ter)}
{\underset{\alpha\notin \overline{D}}{\lim_{\alpha\searrow z}}}\  I_{\alpha}^{(1)}=
-\sigma_{n}r_{0}^{n-1}=-|\partial B|.\end{equation}

\begin{remark}\label{r:misura}For reader's convenience we give the proof of \eqref{e:Bessel}.

It is known that for every dimension $p$, the surface measure of the unit  spheres in $\mathbb{R}^p$ is 
\[\sigma_p=\frac{2\pi^{\frac{p}{2}}}{\Gamma\big(\frac{p}{2}\big)},\]
where here $\Gamma$ is the  Euler's gamma function.  
Taking into account that $\Gamma\big(\frac{1}{2}\big)=\pi^{\frac{1}{2}}$ and using 
the  Euler's beta function, we have 
\begin{align*}
 \frac{\sigma_n}{\sigma_{n-1}}
&= 
\frac{\pi^{\frac{n}{2}}}{\Gamma\big(\frac{n}{2}\big)}
\frac{\Gamma\big(\frac{n-1}{2}\big)}{\pi^{\frac{n-1}{2}}}
=
\frac{\Gamma\big(\frac{1}{2}\big)\Gamma\big(\frac{n-1}{2}\big)}{\Gamma\big(\frac{n}{2}\big)}
\\ & =\beta\big(\frac{n-1}{2},\frac{1}{2}\big)
=\int_0^{\infty}\frac{t^{\frac{n-1}{2}-1}}{(1+t)^{\frac{n}{2}}}\,dt
=2\int_0^{\infty}\frac{s^{n-2}}{(1+s^2)^{\frac{n}{2}}}\,ds.\end{align*}
\end{remark}

{\bf STEP 7.}

In this step we provide an estimate of $I_{\alpha}^{(2)}$.

By   using once again that the absolute value of the function $r\mapsto (r+\Phi)w$ is bounded by a 
 positive constant  $C(R)$ for every  $r$ close to $r_0$, see \eqref{e:CR}, 
\[|I_{\alpha}^{(2)}|\le C(R) 2^{n-2}r_0^{n-2}\int_{\{|\eta|=1 \}}\Big(\int_{0}^{\frac{R}{r-r_0}}\frac{\omega (s(r-r_0))}
{s}\,\, \,ds\Big)\,d\sigma(\eta)
\]
that implies 

\begin{equation}
 \label{e:MT/13(9)bis}
|I_{\alpha}^{(2)}|\le C(R) 2^{n-2}\sigma_{n-1}r_0^{n-2}\int_{0}^{R}\frac{\omega (s)}
{s}\,\, \,ds=:c(n,r_0,R)\int_{0}^{R}\frac{\omega (s)}
{s}
\end{equation}
for every $\alpha=(0,r)$ close to $z=(0,r_0)$.

\medbreak

{\bf STEP 8.}

By proceeding  as in the previous step, we get  an estimate of $J_{\alpha}$  for every  $\alpha$ close to $z$. 

By the definition of $J_{\alpha}$, see \eqref{e:Jalpha}, we obtain \[|J_{\alpha}|\le 
2^{n-2}\tilde{C}(R) r_0^{n-2}\sigma_{n-1}
\int_{0}^{\frac{R}{r-r_0}}
(r-r_0)\,ds\]
with \[\tilde{C}(R) :=\sup_{|y|\le R}N(y),\]
that implies \begin{equation}
 \label{e:stimaJ}
|J_{\alpha}|\le 2^{n-2}\tilde{C}(R) r_0^{n-2}\sigma_{n-1}R=:\tilde{c}(n,r_0,R)R
\end{equation}
for every $\alpha$ close to $z$.

\medbreak
{\bf STEP 9.}

Using the notation of  the previous steps,  
\[
 \int_{\partial D}k_{\alpha}\,d\sigma=
 \int_{\partial D\setminus U_R}k_{\alpha}\,d\sigma+
 |\partial D\cap U_R|+I_\alpha^{(1)}+I_\alpha^{(2)}+J_\alpha
\]
and, taking into account 
\eqref{e:stimaliminthalpha}, 
\eqref{e:MT/14(11ter)}, \eqref{e:MT/13(9)bis} and 
\eqref{e:stimaJ}, we obtain
\begin{align*}
 &{\underset{\alpha\notin \overline{D}}{\liminf_{\alpha\searrow z}}}\ 
 \int_{\partial D}k_{\alpha}\,d\sigma\ge \\  &   |\partial D\setminus U_R|+ |\partial D\cap U_R|
 -|\partial B| -
 c(n,r_0,R)\int_{0}^{R}\frac{\omega (s)}
{s}\,ds-
 \tilde{c}(n,r_0,R)R.
 \end{align*}
 Letting $R$ go to zero, we conclude that 
 \begin{equation}\label{e:stimaliminf}
{\underset{\alpha\notin \overline{D}}{\liminf_{\alpha\searrow z}}}\ 
 \int_{\partial D}k_{\alpha}\,d\sigma\ge 
 |\partial D|-|\partial B|.\end{equation}

\medbreak
{\bf STEP 10.}

Taking into account that $k_{\alpha}(0)=0$, we easily
get
\[\left|k_{\alpha}(0)-\fint_{\partial D}k_{\alpha}\,d\sigma\right|\ge\frac{1}{|\partial D|} \int_{\partial D}k_{\alpha}\,d\sigma,
\]
therefore, using the estimate \eqref{e:stimaliminf} in Step 9, and keeping in mind the definition of $L(z)$, see \eqref{d:Lz}, 
 we conclude that 
\begin{equation}\label{e:stimaliminf2}
L(z):={\underset{\alpha\notin \overline{D}}{\liminf_{\alpha\searrow z}}}\ \left|k_{\alpha}(0)-\fint_{\partial D}k_{\alpha}\,d\sigma\right|\ge\frac{|\partial D|-|\partial B|}{|\partial D|}.\end{equation}
 Since $z$ is an arbitrary 
 point of $T(\partial D, 0)$, from \eqref{e:stimaliminf2} we  get 
 \[\mathcal{K}(\partial D,0):=\inf_{z\in T(\partial D,0)}L(z)\ge \frac{|\partial D|-|\partial B|}{|\partial D|},\]
that is  exactly 
 \eqref{e:1.6}.
 
%
%
%

 \section{A comparison between $\mathcal{G}$ and $\mathcal{K}$ gaps} \label{s:idem}

Aim of this section is to prove inequalities \eqref{e:G>=K}  and  \eqref{e:1.15Touch} 
stated in the Introduction, Subsection 
\ref{ss:confrontoGK}. The main point is the proof 
of \eqref{e:1.15Touch}. 

\begin{proposition} \label{p:hstarfinito}
	Let $D\subseteq \mathbb{R}^n$ be an open and bounded set and let $x_0\in D$. If 
	$|\partial D|<\infty$ and $T(\partial D, x_0)\ne \emptyset$ then 
	\[h^*(\partial D, x_0)<\infty.\] 
\end{proposition}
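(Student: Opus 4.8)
The plan is to prove that $h^*(\partial D,x_0)<\infty$ by essentially reusing the computation that was carried out in the proof of Theorem~\ref{t:cuplansurf}, but now tracking $\int_{\partial D}|h_\alpha|\,d\sigma$ instead of $\int_{\partial D}k_\alpha\,d\sigma$. Without loss of generality assume $x_0=0$, fix any $z\in T(\partial D,0)$, and (by rotation invariance) take $z=(0,r_0)$ with $r_0=\operatorname{dist}(0,\partial D)$. It suffices to show that $L^*(z)=\limsup_{\alpha\searrow z,\,\alpha\notin\overline D}\fint_{\partial D}|h_\alpha|\,d\sigma<\infty$, since $h^*(\partial D,0)=\inf_{z\in T(\partial D,0)}L^*(z)$.

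First I would split $\partial D=(\partial D\setminus U_R)\cup(\partial D\cap U_R)$ using exactly the Lyapunov-Dini neighborhood $U_R=\hat B_R\times\,]a,b[$ from STEP~1 of the previous proof, with $\partial D\cap U_R$ the graph of the $C^1$ function $\Phi$ satisfying $\Phi(0)=r_0$, $\nabla\Phi(0)=0$, and $|\nabla\Phi(y)|\le\omega(|y|)$. On $\partial D\setminus U_R$ the function $h_\alpha$ is uniformly bounded for $\alpha$ close to $z$ (this is \eqref{e:halphastima} applied to $h_\alpha=k_\alpha-1$), so $\int_{\partial D\setminus U_R}|h_\alpha|\,d\sigma$ stays bounded. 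For the piece over $\partial D\cap U_R$, I would parametrize by the graph and pass to polar coordinates in $y$ exactly as in STEPS~3--4, obtaining
\[
\int_{\partial D\cap U_R}|h_\alpha|\,d\sigma
= r^{n-2}\int_{\{|\eta|=1\}}\!\!\int_0^{\frac{R}{r-r_0}}
\frac{s^{n-2}}{\big(s^2+(\frac{\Phi-r}{r-r_0})^2\big)^{n/2}}
\Big|\tfrac{r^2-\Phi^2}{r-r_0}-s^2(r-r_0)\Big|\,N\,ds\,d\sigma(\eta),
\]
where $\Phi=\Phi((r-r_0)s\eta)$, $N=N((r-r_0)s\eta)$; note the integrand is already nonnegative so no sign bookkeeping is needed. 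Using the mean value identities from STEP~5, $\frac{r^2-\Phi^2}{r-r_0}=(r+\Phi)(1+s\Sigma)$ with $|\Sigma|\le\omega(s(r-r_0))\le\omega(R)$, and the bound $|s\Sigma|\le\tfrac12$ on $[0,\delta]$ from STEP~6, the denominator is bounded below by $c\,(s^2+1)^{n/2}$ on $[0,\delta]$ and by $c\,s^n$ on $[\delta,\frac{R}{r-r_0}]$; combined with $|r+\Phi|N\le C(R)$ this yields a dominating function $C\min(s^{n-2},s^{-2})$ on $[0,\frac R{r-r_0}]$ together with a contribution from the $s^2(r-r_0)$ term bounded as in STEP~8 by $\tilde c(n,r_0,R)R$. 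Hence $\limsup_{\alpha\searrow z}\int_{\partial D\cap U_R}|h_\alpha|\,d\sigma$ is finite (indeed bounded by a constant depending only on $n,r_0,R$ plus lower-order terms), which gives $L^*(z)<\infty$ and therefore $h^*(\partial D,x_0)<\infty$.

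The only genuinely new point compared with the proof of Theorem~\ref{t:cuplansurf} is that one cannot exploit the favorable sign of $h_\alpha$ that made STEP~6 converge to a clean limit; but since we only need an upper bound, the absolute value is handled by the very same domination estimates (STEPS~6--8). I expect the main (mild) obstacle to be verifying that the lower bound $s^2+(\frac{\Phi-r}{r-r_0})^2\ge c(s^2+1)$ on $[0,\delta]$ holds uniformly as $r\to r_0$: this uses $\frac{\Phi-r}{r-r_0}\to-1$ uniformly (from $\Phi((r-r_0)s\eta)\to r_0$ and $\frac{\Phi-\Phi(0)}{r-r_0}=s\langle\nabla\Phi(\zeta),\eta\rangle$ with $|\nabla\Phi(\zeta)|\le\omega(R)$), exactly as recorded in STEP~5--6, so it is already available. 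Everything else is a routine rerun of the Theorem~\ref{t:cuplansurf} computation, and once $h^*<\infty$ is in hand, inequality \eqref{e:G>=K} follows by testing the surface Gauss gap against the admissible functions $k_\alpha\in\mathcal H(\overline D)$ and dividing $|k_\alpha(0)-\fint_{\partial D}k_\alpha\,d\sigma|$ by $\fint_{\partial D}|k_\alpha|\,d\sigma\le 1+\fint_{\partial D}|h_\alpha|\,d\sigma$, then taking $\liminf_{\alpha\searrow z}$ and the infimum over $z\in T(\partial D,0)$.
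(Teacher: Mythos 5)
Your proposal is correct and follows essentially the same route as the paper's proof: the same split of $\partial D$ into $\partial D\setminus U_R$ (where $h_\alpha$ is uniformly bounded by \eqref{e:halphastima}) and $\partial D\cap U_R$ (where the graph/polar-coordinate reduction of Steps 3--5 and the domination bounds of Steps 6--8 are rerun with absolute values), yielding $L^*(z)<\infty$ and hence $h^*(\partial D,x_0)<\infty$. The paper packages the $\partial D\cap U_R$ estimate as $\hat I^{(1)}_\alpha+\hat I^{(2)}_\alpha+J_\alpha$ with $\lim\hat I^{(1)}_\alpha=|\partial B|$, but this is exactly your "absolute value handled by the same domination" step, so the two arguments coincide.
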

\begin{proof}
	Since $h^*$ is  translation invariant, we can and do assume  $x_0=0\in D$. 
	  Let  $z\in T(\partial D,0)$ be arbitrarily fixed. 
	   We will prove that 
	  \begin{equation}\label{e:L*prop}L^*(z):=
	  {\underset{\alpha\notin \overline{D}}{\limsup_{\alpha\searrow z}}}
	  \int_{\partial D}|h_{\alpha}|\,d\sigma<\infty.
	  \end{equation}
	  Since $h^*(\partial D,0)=\inf_{\zeta\in T(\partial D, 0)}L^*(\zeta)$ this will prove the proposition. 
	  
	  It is easy to check that  $L^*(z)$ is rotation invariant, then without loss of generality 	  
	    we can assume   $z=(0,r_0)\in \mathbb{R}^{n-1}\times \mathbb{R}$, where $r_0=\operatorname{dist}(0,\partial D)$. 
	    
	  From now on we will use the notation introduced in Section \ref{sec:2}. In particular, 
	we consider $\alpha\in \mathbb{R}^n$, satisfying \eqref{e:alpha}, i.e. 
	\[\alpha=(0,r)\quad \text{  with $r_0<r<b$}.\]
	Then $\alpha\notin \overline{D}$. 
	We have 
	\begin{equation}
		\label{e:DCDL(3)}
		\int_{\partial D}|h_{\alpha}|\,d\sigma=\int_{\partial D\cap U_R}|h_{\alpha}|\,d\sigma+
		\int_{\partial D\setminus U_R}|h_{\alpha}|\,d\sigma.
	\end{equation}
	By  
	\eqref{e:halphastima}, 
	\[{\underset{\alpha\notin \overline{D}}{\lim_{\alpha\searrow z}}}\ \int_{\partial D\setminus U_R}|h_{\alpha}|\,d\sigma=\int_{\partial D\setminus U_R}|h_{z}|\,d\sigma.\]
	For every $x\in \partial D\setminus U_R$
	\[|x|+|z|\le 2 \operatorname{diam}(D)\quad \text{and}\quad
	|x-z|\ge \operatorname{dist}(z,\mathbb{R}^n\setminus U_R),
	\]
	therefore 
	\[|h_{z}|\le r_0^{n-2}\frac{2\operatorname{diam}(D)}{\left(\operatorname{dist}(z,\mathbb{R}^n\setminus U_R)\right)^{n-1}}\le 2 \left(\frac{\operatorname{diam}(D)}{\operatorname{dist}(z,\mathbb{R}^n\setminus U_R)}\right)^{n-1}.\]
	We conclude that 
	\begin{equation}
		\label{e:stimakalpha-U}
	{\underset{\alpha\notin \overline{D}}{\lim_{\alpha\searrow z}}}\ \int_{\partial D\setminus U_R}|h_{\alpha}|\,d\sigma\le 
		2 \,|\partial D| \left(\frac{\operatorname{diam}(D)}{\operatorname{dist}(z,\mathbb{R}^n\setminus U_R)}\right)^{n-1} .        
	\end{equation}

	To estimate the first integral 
	at the right-hand side of \eqref{e:DCDL(3)}, we
	use the same argument used in Steps 4 and 5  to estimate 
	$\int_{\partial D\cap U_R}h_{\alpha}\,d\sigma$.
	Precisely, the same computations allow to conclude that 
	\begin{equation}\label{e:|kalpha|}
		\int_{\partial D\cap U_R}|h_{\alpha}|\,d\sigma\le \hat{I}^{(1)}_{\alpha}+\hat{I}^{(2)}_{\alpha}+J_{\alpha},
	\end{equation}
	where 
	\[\hat{I}_{\alpha}^{(1)}:=
	r^{n-2}\int_{\{|\eta|=1 \}}\Big(\int_{0}^{\frac{R}{r-r_0}}\frac{s^{n-2}}
	{\Big(s^2+(1+s\Sigma)^2\Big)^{\frac{n}{2}}}|r+\Phi|N\,ds\Big)\,d\sigma(\eta),
	\]\[ \hat{I}_{\alpha}^{(2)}:=
	r^{n-2}\int_{\{|\eta|=1 \}}\Big(\int_{0}^{\frac{R}{r-r_0}}\frac{s^{n-1}}
	{\Big(s^2+(1+s\Sigma)^2\Big)^{\frac{n}{2}}}|r+\Phi|\,|\Sigma|\, N\,ds\Big)\,d\sigma(\eta)\]
	and $J_\alpha$ is as in \eqref{e:Jalpha}, i.e. 
	\[J_{\alpha}:=
	r^{n-2}\int_{\{|\eta|=1 \}}\Big(\int_{0}^{\frac{R}{r-r_0}}\frac{s^{n}}
	{\Big(s^2+\left(\frac{\Phi-r}{r-r_0}\right)^2\Big)^{\frac{n}{2}}}
	(r-r_0)N\,ds\Big)\,d\sigma(\eta).\]
	
	By the computations in Step 6 and taking into account that  $|r+\Phi|\to 2r_0$ as $r\to r_0$, we can pass to the limit under the integral sign, obtaining 
	\begin{equation}
		\label{e:limhatI1}
		{\underset{\alpha\notin \overline{D}}{\lim_{\alpha\searrow z}}}\
		\hat{I}_{\alpha}^{(1)}=|\partial B|\quad \text{with $B:=B(0,r_0)$}.\end{equation}
	As in Step 7 we get 
	\begin{equation}
		\label{e:limhatI2}
		\hat{I}_{\alpha}^{(2)}\le C(R) 2^{n-2}\sigma_{n-1}r_0^{n-2}\int_{0}^{R}\frac{\omega (s)}
		{s}  \,ds=:c(n,r_0,R)\int_{0}^{R}\frac{\omega (s)}
		{s}\,ds
	\end{equation}
	and  \eqref{e:stimaJ} holds, i.e. 
	\begin{equation}
		\label{e:stimahatJ}
		J_{\alpha}\le 2^{n-2}\tilde{C}(R) r_0^{n-2}\sigma_{n-1}R=:\tilde{c}(n,r_0,R)R
	\end{equation}
	for every $\alpha$ close to $z$.
	
	Collecting \eqref{e:|kalpha|}--\eqref{e:stimahatJ}, we obtain  
	\begin{equation}\label{e:|kalpha|bis}
		{\underset{\alpha\notin \overline{D}}{\limsup_{\alpha\searrow z}}}\ \int_{\partial D\cap U_R}|h_{\alpha}|\,d\sigma\le |\partial B|+c(n,r_0,R)\int_{0}^{R}\frac{\omega (s)}
		{s}\,ds+\tilde{c}(n,r_0,R)R.
	\end{equation}
	This estimate, together with \eqref{e:DCDL(3)} and  \eqref{e:stimakalpha-U} we get 
	\begin{align*}&|\partial D|\ L^*(z)=
		  {\underset{\alpha\notin \overline{D}}{\limsup_{\alpha\searrow z}}}\int_{\partial D}|h_{\alpha}|\,d\sigma
				\\ 
		&\le  
		2 \,|\partial D| \left(\frac{\operatorname{diam}(D)}{\operatorname{dist}(z,\mathbb{R}^n\setminus U_R)}\right)^{n-1} +|\partial B|+c(n,r_0,R)\int_{0}^{R}\frac{\omega (s)}
		{s}+\tilde{c}(n,r_0,R)R.\end{align*}
Since this last term is finite, then we get  \eqref{e:L*prop}. This concludes the proof.
\end{proof}
\begin{remark}\label{r:3.1bis}
	Keeping in mind the definitions of 
	$L(z)$ and $L^*(z)$, see \eqref{d:Lz} and \eqref{e:defL*}, we obviously have
	\[L(z)\le 1+L^*(z)\]
	for every $z\in T(\partial D,0)$. Then, by \eqref{e:L*prop}, $L(z)<\infty$. As a consequence,   
	\begin{center}$\mathcal{K}(\partial D,x_0)<\infty$ if and only if $T(\partial D,x_0)\ne \emptyset$. \end{center}
	\end{remark}

Let us now prove  inequality  \eqref{e:G>=K}.

\begin{proposition} \label{p:G>=K}Under the assumptions of Proposition \ref{p:hstarfinito} we have 
\begin{equation}
	\label{e:G>=K2}
	\mathcal{G}(\partial D,x_0)\ge \frac{	\mathcal{K}(\partial D,x_0)}{1+h^*(\partial D,x_0)}.
\end{equation}
\end{proposition}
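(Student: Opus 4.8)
The plan is to use the translation invariance of $\mathcal{G}$, $\mathcal{K}$ and $h^*$ to reduce to the case $x_0 = 0$, and then to produce the inequality by feeding the Kuran functions $k_\alpha$ into the variational definition of the surface Gauss gap. Fix $z \in T(\partial D, 0)$; as in the proof of Theorem~\ref{t:cuplansurf} we may assume $z = (0, r_0)$, and we take $\alpha = (0, r)$ with $r_0 < r < b$, so that $\alpha \notin \overline{D}$ and hence $k_\alpha \in \mathcal{H}(\overline{D}) \subseteq \mathcal{H}(D) \cap C(\overline{D})$. A one-line computation gives $k_\alpha(z) = 1 - r^{n-2}(r+r_0)(r - r_0)^{-(n-1)}$, which tends to $-\infty$ as $r \to r_0^+$; thus for every $\alpha$ close enough to $z$ the function $k_\alpha$ is nonzero on a relatively open piece of $\partial D$ near $z$, so $k_\alpha$ is a legitimate nonzero test function for $\mathcal{G}(\partial D, 0)$ and $\fint_{\partial D}|k_\alpha|\,d\sigma > 0$.

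First I exploit that $k_\alpha(0) = 0$: the definition of the surface Gauss gap gives, for all such $\alpha$,
\[
\mathcal{G}(\partial D, 0) \ge \frac{\bigl|\fint_{\partial D} k_\alpha\,d\sigma\bigr|}{\fint_{\partial D}|k_\alpha|\,d\sigma}.
\]
Then I bound the denominator from above by the elementary pointwise inequality $|k_\alpha| = |1 + h_\alpha| \le 1 + |h_\alpha|$, whence $\fint_{\partial D}|k_\alpha|\,d\sigma \le 1 + \fint_{\partial D}|h_\alpha|\,d\sigma$, and therefore
\[
\mathcal{G}(\partial D, 0)\,\Bigl(1 + \fint_{\partial D}|h_\alpha|\,d\sigma\Bigr) \ge \Bigl|\fint_{\partial D} k_\alpha\,d\sigma\Bigr|
\]
for every $\alpha = (0, r)$ with $r$ sufficiently close to $r_0$.

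Next I pass to the limit $\alpha \searrow z$, paying attention to the order of the limit operations. Picking a sequence $\alpha_j \searrow z$ along which $\bigl|\fint_{\partial D} k_{\alpha_j}\,d\sigma\bigr| \to L(z)$ and taking $\limsup_j$ in the last inequality — recalling that $\limsup_j \fint_{\partial D}|h_{\alpha_j}|\,d\sigma \le L^*(z)$, which is finite by Proposition~\ref{p:hstarfinito} — I obtain
\[
\mathcal{G}(\partial D, 0)\,\bigl(1 + L^*(z)\bigr) \ge L(z) \ge \mathcal{K}(\partial D, 0)
\]
(the first inequality being trivial if $\mathcal{G}(\partial D, 0) = \infty$). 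Since this holds for every $z \in T(\partial D, 0)$ and $h^*(\partial D, 0) = \inf_z L^*(z)$, for each $\varepsilon > 0$ I choose $z$ with $L^*(z) < h^*(\partial D, 0) + \varepsilon$ and conclude $\mathcal{G}(\partial D, 0)\,\bigl(1 + h^*(\partial D, 0) + \varepsilon\bigr) \ge \mathcal{K}(\partial D, 0)$; letting $\varepsilon \to 0^+$ and dividing by the finite number $1 + h^*(\partial D, 0) \ge 1$ yields \eqref{e:G>=K2}. The only delicate point is precisely this last bookkeeping step — one must use a near-minimizer $z$ for $h^*$ rather than trying to push the infimum through the inequality — together with the verification that each $k_\alpha$ is an admissible nonzero competitor; all the analytic estimates needed here were already obtained in Section~\ref{sec:2} and in the proof of Proposition~\ref{p:hstarfinito}.
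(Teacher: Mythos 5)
Your proof is correct and takes essentially the same route as the paper's: feed the Kuran functions $k_\alpha$ into the variational definition of $\mathcal{G}$, pass to the limit $\alpha\searrow z$ to obtain $\mathcal{G}(\partial D,0)\ge L(z)/(1+L^*(z))\ge \mathcal{K}(\partial D,0)/(1+L^*(z))$, and then optimize over $z\in T(\partial D,0)$. The extra bookkeeping you supply — extracting a sequence realizing $L(z)$, checking $k_\alpha$ is a legitimate nonzero competitor, and handling the infimum via near-minimizers — is precisely what the paper compresses into one-line steps.
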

\begin{proof}
	Since both sides of inequality \eqref{e:G>=K2} are translation invariant we can and do assume $x_0=0$.  
	Since the Kuran function 
	$k_{\alpha}$ defined in \eqref{e:halpha}
	 is harmonic in $\overline{D}$ for every $\alpha\notin \overline{D}$, we have 
	\[\mathcal{G}(\partial D,0)\ge    
	\frac{\left|k_{\alpha}(0)-\displaystyle \fint_{\partial D}k_{\alpha}\,d\sigma\right|}{\displaystyle\fint_{\partial D}|k_{\alpha}|\,d\sigma}
	\qquad \forall \alpha\notin \overline{D}. \]
	Then, fixed $z\in T(\partial D,0)$,  
		\[\mathcal{G}(\partial D,0)\ge 
	  {\underset{\alpha\notin \overline{D}}{\liminf_{\alpha\searrow z}}}
	\frac{\left|k_{\alpha}(0)-\displaystyle \fint_{\partial D}k_{\alpha}\,d\sigma\right|}{\displaystyle\fint_{\partial D}|k_{\alpha}|\,d\sigma}.\]
From this inequality we get 
\[\mathcal{G}(\partial D,0)\ge 	\frac{\displaystyle  {\underset{\alpha\notin \overline{D}}{\liminf_{\alpha\searrow z}}}\left|k_{\alpha}(0)-\displaystyle \fint_{\partial D}k_{\alpha}\,d\sigma\right|}{\displaystyle 1+ {\underset{\alpha\notin \overline{D}}{\limsup_{\alpha\searrow z}}}\fint_{\partial D}|h_{\alpha}|\,d\sigma}=\frac{L(z)}{1+L^*(z)}.\]
By definition of $\mathcal{K}(\partial D,0)$, see  \eqref{d:Kurangap}, we obtain
\[\mathcal{G}(\partial D,0)\ge 	\frac{\mathcal{K}(\partial D,0)}{1+L^*(z)}.\]
Due to the arbitrariness of $z\in T(\partial D,0)$, by definition of $h^*$, see \eqref{e:hstar}, we get 
\[\mathcal{G}(\partial D,0)\ge  \frac{\mathcal{K}(\partial D,0)}{1+h^*(\partial D,0)}.\]
\end{proof}

\section{Proof of Theorem \ref{t:1.1}}
\label{s:palla-col-becco}

\subsection{The beaked sphere}
\label{ss:definitionuDeps}

In $\mathbb{R}^n$, $n\ge 2$, let us denote a vector $x\in \mathbb{R}^n$ as $x=(x_1,\hat{x})\in \mathbb{R}\times \mathbb{R}^{n-1}$. 

Fixed $\varepsilon>0$, let $B(\varepsilon)$ be the ball centered at $x(\varepsilon):=(1+\varepsilon,\hat{0})$ and radius $1$; i.e., 
\[B(\varepsilon):=B(x(\varepsilon),1).\] 

Let us consider the   cone
\[K:=\left\{x\in \mathbb{R}^n\,:\,\frac{x_1}{|x|}>
\frac{1}{\sqrt{n}}\right\}=\left\{(x_1,\hat{x})\in
\mathbb{R}^n\,:\, \frac{|\hat{x}|}{\sqrt{n-1}}<x_1\right\}.\]

Define the function  $u:\mathbb{R}^n\setminus\{0\}\to \mathbb{R}$,
\[u(x):=\frac{1}{|x|^n}\left(\frac{x_1^2}{|x|^2}-\frac{1}{n}\right).\]
Notice that  $u\in \mathcal{H}(\mathbb{R}^n\setminus\{0\})$, since \[u=c_n\frac{\partial^2 \Gamma}{\partial x_1^2}, \]
where $\Gamma$ is the fundamental solution of the Laplace operator with pole at $0$ and $c_n$ is a dimensional constant.
Moreover, 
\[u>0\quad \text{in $K$}\]
and
\[u=0\quad \text{on $\partial K$}.\]

For every $0<\varepsilon<\varepsilon_0$,  with $\varepsilon_0$ suitable absolute constant sufficiently small, 
the set $K\cap \partial B(\varepsilon)$ has two non-empty connected components: 
one  containing  $(\varepsilon,\hat{0})$ and the other one containing $(2+\varepsilon,\hat{0})$.  
We denote $\Sigma_{\varepsilon}$ the first of these two components. 

Fixed  $m\in \mathbb{N}$, $m>n$, we  define 
 \[\Sigma_{\varepsilon}^*:=\{x\in K\,:\, |x|=\varepsilon^m\}\]
and 
\[K(\varepsilon):=\{\lambda x+(1-\lambda)y\,:\, x\in \Sigma_{\varepsilon}^*, \  y\in \Sigma_{\varepsilon}, \ \lambda\in [0,1[\,\}.\]

\begin{center}
	\includegraphics*[totalheight=5cm]{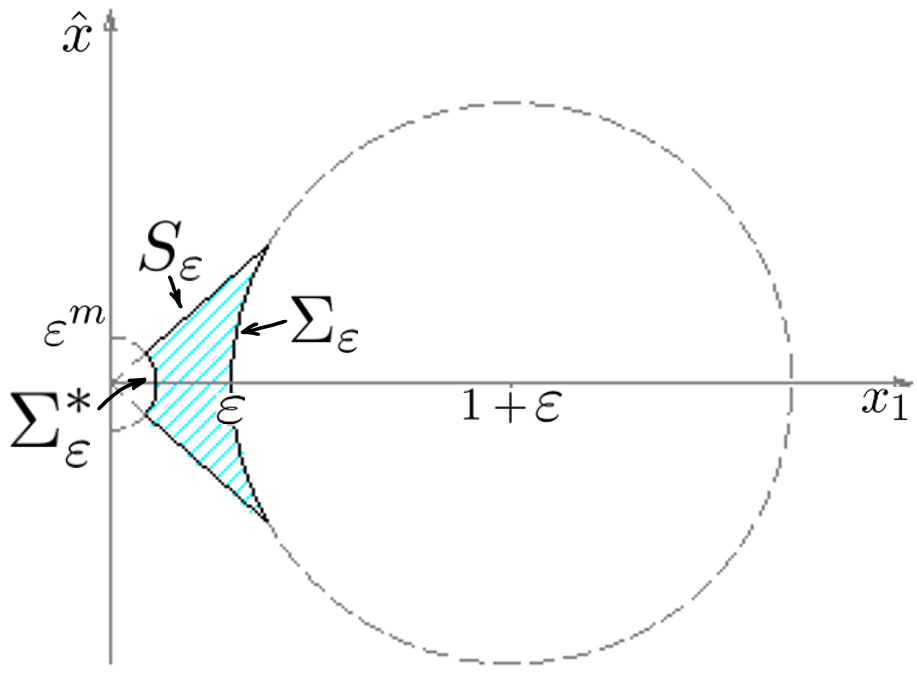}\captionof{figure}{The set $K(\varepsilon)$}
\end{center} 

If we consider  the open set 
\[D(\varepsilon):=B(\varepsilon)\cup K(\varepsilon),\]
 we will  call {\em beaked sphere} its boundary $\partial D(\varepsilon)$. 
 We have \begin{equation}\label{e:decompDepsilon}\partial D(\varepsilon)=(\partial B(\varepsilon)\setminus \Sigma_{\varepsilon})  \cup \Sigma_{\varepsilon}^*
\cup S_{\varepsilon},\end{equation}where  
\[S_{\varepsilon}:=\partial K\cap \partial K(\varepsilon).\]
%
%
We remark that 
\[u\in \mathcal{H}(\overline{D(\varepsilon)}),\qquad u=0 \ \text{ in $S_{\varepsilon}$}, \qquad  u>0\ \text{in $K(\varepsilon)$.} \]

\begin{center}
\includegraphics*[totalheight=5cm]{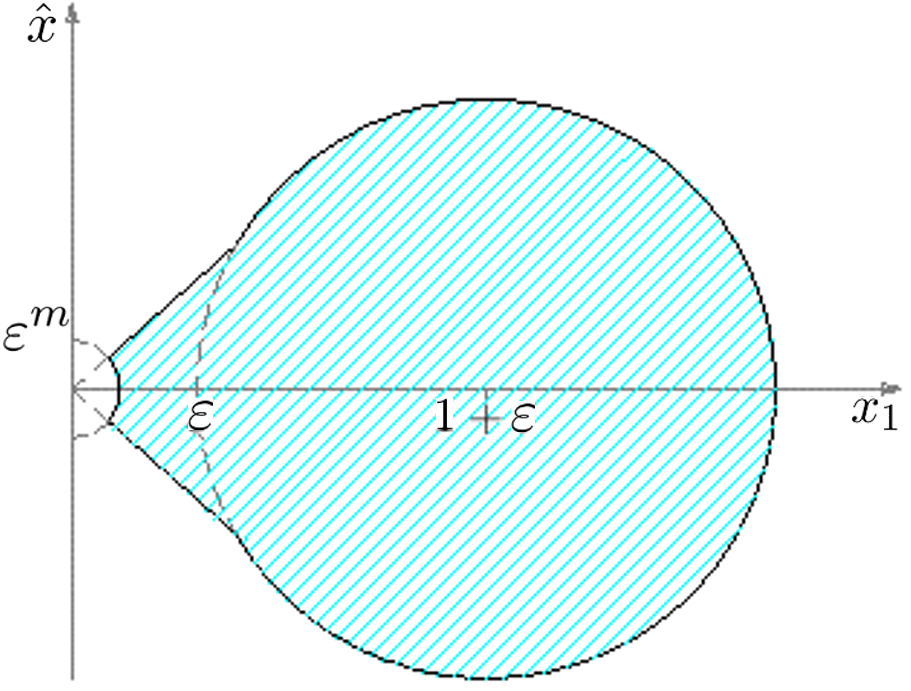}\captionof{figure}{The set $D(\varepsilon)$}
\end{center} 

We observe that 
$D(\varepsilon) $ is geometrically close to a ball, since 
\begin{equation}\label{e:inclusione}B(\varepsilon)\subseteq D(\varepsilon)\subseteq B^*(\varepsilon),\end{equation}
where \[B^*(\varepsilon):=B(x(\varepsilon), 1+\varepsilon).\]
We stress that $B(\varepsilon)$ is the biggest ball centered at $x(\varepsilon)$ and  contained in  
$ D(\varepsilon)$.

\subsection{The Gauss gap estimate of the beaked sphere}
                                                    
We prove that 
\begin{equation}\label{e:stimasottoGG}\liminf_{\varepsilon\to 0}
	\mathcal{G}(\partial D(\varepsilon),x(\varepsilon))>0.\end{equation}
This inequality is a consequence of the following claim: under the notation of Subsection \ref{ss:definitionuDeps}  there exist $\varepsilon_0, c>0$ such that 
\begin{equation}\label{e:stimasotto}\displaystyle \frac{\left|u(x(\varepsilon))-\displaystyle\fint_{\partial D(\varepsilon)}u(x)\,dx\right|}{\displaystyle\fint_{\partial D(\varepsilon)}|u(x)|\,dx}>c\qquad \forall\,  \varepsilon\in ]0,\varepsilon_0[.\end{equation}
                  
                  Let us prove this claim. 
                                                                                                                                                                                    
 Since $u\in \mathcal{H}(\overline{D(\varepsilon)})$ and $B(\varepsilon)\subseteq D(\varepsilon)$,   then by the surface   mean value formula  
 \[u(x(\varepsilon))-\displaystyle\fint_{\partial D(\varepsilon)}u(x)\,dx=
 \displaystyle\fint_{\partial B(\varepsilon)}u(x)\,dx-
 \displaystyle\fint_{\partial D(\varepsilon)}u(x)\,dx.\]
 By \eqref {e:decompDepsilon} and recalling that $u=0$ on $S_{\varepsilon}$ we get 
 \begin{align}
  \nonumber 
  &\qquad \qquad \qquad 
u(x(\varepsilon))-\displaystyle\fint_{\partial D(\varepsilon)}u(x)\,d\sigma=\\ \nonumber   =&
 \frac{1}{|B(\varepsilon)|}\left(\left(1-
\frac{ |\partial B(\varepsilon)|}{|\partial D(\varepsilon)|}\right)
\int_{\partial B(\varepsilon)\setminus \Sigma_{\varepsilon}}u(x)\,d\sigma+
\int_{  \Sigma_{\varepsilon}}u(x)\,d\sigma -
\frac{ |\partial B(\varepsilon)|}{|\partial D(\varepsilon)|}
\int_{  \Sigma^*_{\varepsilon}}u(x)\,d\sigma\right)
\\ =&:
\frac{1}{|B(\varepsilon)|}\left(\left(1-
\frac{ |\partial B(\varepsilon)|}{|\partial D(\varepsilon)|}\right)
I^{(1)}_{\varepsilon}+
I^{(2)}_{\varepsilon}-
\frac{ |\partial B(\varepsilon)|}{|\partial D(\varepsilon)|}
I^{(3)}_{\varepsilon}\right).
\label{e:u-ueps} \end{align}
We start by estimating $I^{(3)}_{\varepsilon}$. By the change of variable $x=\varepsilon^m y$ we have 
\begin{align*}
  \nonumber 
  I^{(3)}_{\varepsilon}=&\int_{\underset{ |x|=\varepsilon^m}{x\in K}}u(x)\,d\sigma(x)
\\ \nonumber =& \int_{\underset{ |y|=1}{y\in K}}
\frac{1}{\varepsilon^{mn}}\frac{1}{|y|^{n}}
\left(\frac{y_1^2}{|y|^2}-\frac{1}{n}\right)\varepsilon^{m(n-1)} \,d\sigma(y)  
\\ \nonumber =& \int_{\underset{ |y|=1}{y\in K}}
\left(\frac{y_1^2}{|y|^2}-\frac{1}{n}\right)\,d\sigma(y)  
\frac{\varepsilon^{m(n-1)} }{\varepsilon^{mn}}.
\end{align*}
Since 
\[\frac{y_1^2}{|y|^2}-\frac{1}{n}>0\qquad \forall y\in K\]
we get 
\begin{equation}I^{(3)}_{\varepsilon}= c_0\frac{1}{\varepsilon^{m}}\qquad \text{with $0<c_0<\infty$ independent of $\varepsilon$.}
\label{e:stimaI(3)} \end{equation}
                    
Let us now consider  $I^{(2)}_{\varepsilon}$. By the change of variable $x=\varepsilon y$ we have 
 \begin{align}
  \nonumber 
   |I^{(2)}_{\varepsilon}|=&
\left|\int_{  \Sigma_{\varepsilon}}
\frac{1}{|x|^n}\left(\frac{x_1^2}{|x|^2}-\frac{1}{n}\right)\,d\sigma\right|
\\ \nonumber=& \frac{1}{\varepsilon}
\int_{  \frac{1}{\varepsilon}\Sigma_{\varepsilon}}
\frac{1}{|y|^n}\left|\frac{y_1^2}{|y|^2}-\frac{1}{n}\right|\,d\sigma
\\ \nonumber\le & \frac{1}{\varepsilon}
 \int_{  \frac{1}{\varepsilon}\Sigma_{\varepsilon}}
\frac{1}{|y|^n} \,d\sigma 
\\ \nonumber\le &\frac{1}{\varepsilon} \int_{\frac{1}{\varepsilon}\partial B(\varepsilon)}
\frac{1}{|y|^n}\,d\sigma,
\end{align}
where we used $\Sigma_{\varepsilon}\subseteq K\cap B(\varepsilon)$ and 
\[ \left|\frac{y_1^2}{|y|^2}-\frac{1}{n}\right|=\frac{y_1^2}{|y|^2}-\frac{1}{n}\le \frac{y_1^2}{|y|^2}\le 1\qquad \forall  y\in\frac{1}{\varepsilon} \Sigma_{\varepsilon}.\]
Therefore, since $|y|\ge 1$ on $\partial B\big(\frac{x(\varepsilon)}{\varepsilon},\frac{1}{\varepsilon}\big)$, 
\begin{align}\nonumber  
   |I^{(2)}_{\varepsilon}|   
   \le & 
\frac{1}{\varepsilon}
 \int_{\partial B\big(\frac{x(\varepsilon)}{\varepsilon},\frac{1}{\varepsilon}\big)}
\frac{1}{|y|^n}\,d\sigma
\\  \label{e:stimaIeps2}  \le &
\frac{1}{\varepsilon}
 \left|\partial B\big(\frac{x(\varepsilon)}{\varepsilon},\frac{1}{\varepsilon}\big)\right|=\frac{n\omega_n}{\varepsilon^n}. 
\end{align}
  Let us consider $I^{(1)}_{\varepsilon}$.
  By the change of variable $x=\varepsilon y$ we have 
  \begin{align}
  \nonumber 
   |I^{(1)}_{\varepsilon}|   
   = & 
   \left|\int_{\partial B(\varepsilon)\setminus \Sigma_{\varepsilon}}u(x)\,d\sigma\right|\\ \nonumber  \le &
   \int_{\frac{1}{\varepsilon}\partial B(\varepsilon)\setminus \frac{1}{\varepsilon}\Sigma_{\varepsilon}}
 \frac{1}{|y|^n\varepsilon^n}
 \left|\frac{y_1^2}{|y|^2}-\frac{1}{n}\right|\varepsilon^{n-1} 
 \,d\sigma 
\\ \nonumber  \le &
\int_{\partial B\big(\frac{x(\varepsilon)}{\varepsilon},\frac{1}{\varepsilon}\big)}\frac{1}{|y|^n}\,d\sigma\frac{1}{\varepsilon}\\ 
\le &
  \frac{1}{\varepsilon}n\omega_n
   \frac{1}{\varepsilon ^{n-1}}=\frac{n\omega_n}{\varepsilon^n}.\label{e:stimaIeps1}\end{align}
  Since there exists a positive constant $c$, depending only on $n$ and independent of $\varepsilon\in ]0,\varepsilon_0[$, such that 
  \[|\partial D(\varepsilon)|\le c,\]
  then, by \eqref{e:u-ueps},  \eqref{e:stimaI(3)}, \eqref{e:stimaIeps2},  \eqref{e:stimaIeps1}, 
  we get  
  \begin{equation}
   \left|u(x(\varepsilon))-\displaystyle\fint_{\partial D(\varepsilon)}u(x)\,d\sigma\right|\ge c \frac{1}{\varepsilon^m}\qquad \forall \varepsilon\in ]0,\varepsilon_0[,
\label{e:u-uepsfinale} \end{equation}
  with the constant $c$, possibly different to the 
 previous one, and independent  of $\varepsilon$.

We can use the same procedure used above to estimate
 $\displaystyle\fint_{\partial D(\varepsilon)}u(x)\,dx$, to prove that 
 \[\fint_{\partial D(\varepsilon)}|u(x)|\,dx \le c \frac{1}{\varepsilon^m},\]
  where $c>0$ is independent of $\varepsilon\in ]0,\varepsilon_0[$. Here and below $c$ stands for a positive constant independent of $\varepsilon$. 
  
  We therefore conclude that 
  \begin{align*}\mathcal{G}(\partial D(\varepsilon),x(\varepsilon))&\ge
  \frac{\left|u(x(\varepsilon))-\displaystyle\fint_{\partial D(\varepsilon)}u(x)\,dx\right|}{\displaystyle\fint_{\partial D(\varepsilon)}|u(x)|\,dx} 
  \\ &\ge c\frac{ \frac{1}{\varepsilon^m}}{  \frac{1}{\varepsilon^m}}=c>0
  \quad \forall \varepsilon\in  ]0,\varepsilon_0[.\end{align*}
  By this inequality we get \eqref{e:stimasotto}.

  \subsection{The Kuran gap estimate from below of the beaked sphere}

  By Theorem \ref{t:cuplansurf} we have that 
  \begin{equation}
  	\label{e:1.6Deps}
  	\mathcal{K}(\partial D(\varepsilon),x(\varepsilon))
  	\ge \frac{|\partial D(\varepsilon)|-|\partial B(\varepsilon)|}{|\partial D(\varepsilon)|}.
  \end{equation}
  
  By using the previous notation,
  \begin{equation}\label{e:DepsBeps}|\partial D(\varepsilon)|-|\partial B(\varepsilon)|=|S_{\varepsilon}|+|\Sigma^*_{\varepsilon}|-|\Sigma_{\varepsilon}|,\end{equation}
  where in this context 
  $|\cdot |$ stands for the $(n-1)$-dimensional Hausdorff measure.

  We have that 
  \[|S_{\varepsilon}|=\varepsilon^{n-1}\left|\frac{1}{\varepsilon}S_{\varepsilon}\right|\]
  Letting $\varepsilon$ go to $0$ 
  we get 
  \[\frac{\left|S_{\varepsilon}\right|}{\varepsilon^{n-1}}\displaystyle \to_{\varepsilon\to 0} | S_{0}|, \]
  where 
  the set  $S_{0}$ is \[S_{0}:=\partial K\cap \{x_1\le 1\},\]
  whose measure is a positive constant independent of $\varepsilon$. 
  Therefore, \begin{equation}\label{e:DepsBeps1}\left|S_{\varepsilon}\right|=|S_{0}|\varepsilon^{n-1}(1+o(1)).\end{equation}

Analogously, 
\[|\Sigma_{\varepsilon}|=\varepsilon^{n-1}\big|\frac{1}{\varepsilon}\Sigma_{\varepsilon}\big|.\]
 Letting $\varepsilon$ go to $0$ 
we get 
\[\frac{\left|\Sigma_{\varepsilon}\right|}{\varepsilon^{n-1}}\displaystyle \to_{\varepsilon\to 0} |\Sigma_{0}|, \]
where   \[\Sigma_{0}:=K\cap \{x_1= 1\},\]
whose measure is a positive constant independent of $\varepsilon$. 
Therefore, \begin{equation}\label{e:DepsBeps2}\left|\Sigma_{\varepsilon}\right|=|\Sigma_{0}|\varepsilon^{n-1}(1+o(1)).\end{equation}

As far as $\Sigma^*_{\varepsilon}$ is concerned, 
 \[|\Sigma^*_{\varepsilon}|=\varepsilon^{n-1}\big|\frac{1}{\varepsilon}\Sigma^*_{\varepsilon}\big|.\]
 Since \[\frac{1}{\varepsilon}\Sigma^*_{\varepsilon}=
 K\cap B(0,\varepsilon^{m-1}),\]
 by a rescaling argument we get 
  \[\big|\frac{1}{\varepsilon}\Sigma^*_{\varepsilon}\big|=\varepsilon^{(m-1)(n-1)}|\Sigma^*_0|,\]
with  \[\Sigma^*_0:=K\cap B(0,1).\]
Therefore, 
\begin{equation}\label{e:DepsBeps3}\big|\Sigma^*_{\varepsilon}\big|=\varepsilon^{m(n-1)}|\Sigma^*_0|.\end{equation}

Collecting \eqref{e:DepsBeps}--\eqref{e:DepsBeps3} we conclude that 
\begin{equation}\label{e:D-B1}|\partial D(\varepsilon)|-|\partial B(\varepsilon)|=(|S_{0}|-|\Sigma_{0}|)\varepsilon^{n-1}(1+o(1))+\varepsilon^{m(n-1)}|\Sigma^*_0|.\end{equation}
Since  $m>1$ and 
\begin{equation}\label{e:D-B2}\alpha_0:=|S_{0}|-|\Sigma_{0}|>0,\end{equation} we get that there exists   $\varepsilon_1$, with $0<\varepsilon_1\le \varepsilon_0$, such that  
\begin{equation}\label{e:D-B3}|\partial D(\varepsilon)|-|\partial B(\varepsilon)|\ge \frac{\alpha_0}{2}\varepsilon^{n-1}\qquad
\forall \varepsilon\in ]0,\varepsilon_1[.\end{equation}
Taking into account that
\[|\partial D(\varepsilon)|=|\partial B(\varepsilon)|+o(1)\qquad \text{for $\varepsilon\to 0$}\]
and that  $|\partial B(\varepsilon)|$ is the measure of 
a ball of radius $1$ in $\mathbb{R}^n$, we conclude that 
\begin{equation}
	\label{e:1.6Deps2}
	\mathcal{K}(\partial D(\varepsilon),x(\varepsilon))
	\ge \alpha^*\varepsilon^{n-1},
\end{equation}
for every $\varepsilon<<1$, with $\alpha^*>0$ 
depending only  on $n$  and $m$.

\subsection{The Kuran gap estimate from above of the beaked sphere}

Recalling that the Kuran gap is translation invariant, 
to prove an estimate from above  of $\mathcal{K}(\partial D(\varepsilon), x(\varepsilon))$  
it is convenient to translate 
the domain $D(\varepsilon)$ in the $e_1=(1,0,\ldots,0)$ direction, as follows:
\[\hat{D}(\varepsilon):=-x(\varepsilon)+D(\varepsilon),\]
and we act accordingly for the sets   $B(\varepsilon)$, $S_{\varepsilon}$, 
$\Sigma_{\varepsilon}$ and $\Sigma^*_{\varepsilon}$. In particular,
$\hat{B}(\varepsilon)$ turns out to be the unit ball of $\mathbb{R}^n$ centered at the origin. 
Moreover, since the Kuran gap is translation invariant, 
\begin{equation}\label{e:KKt}\mathcal{K}(\partial D(\varepsilon),x(\varepsilon))=\mathcal{K}(\partial \hat{D}(\varepsilon),0).\end{equation}
Obviously the point $e_1$ is a touching point, i.e. 
\[z:=e_1\in T(\partial \hat{D}(\varepsilon),0),\]
therefore, by definition of Kuran gap, see \eqref{d:Kurangap}, 
\[\mathcal{K}(\partial D(\varepsilon),x(\varepsilon))\le \liminf_{t\searrow 1}\left|\fint_{\partial \hat{D}(\varepsilon)}k_{\alpha(t)}\,d\sigma\right|, \]
where $\alpha(t):=te_1$. 

Taking into account that for $t>1$ the Kuran  function $x\mapsto k_{\alpha(t)}(x)$ is harmonic 
in $\overline{B(0,1)}$, and $ k_{\alpha(t)}(0)=0$, 
by the mean value formula  we obtain
\[\fint_{\partial \hat{B}(\varepsilon)}k_{\alpha(t)}\,d\sigma
=\fint_{\partial B(0,1)}k_{\alpha(t)}\,d\sigma=0.\]
Therefore 
\begin{align*}&\mathcal{K}(\partial \hat{D}(\varepsilon),0)
\\	&
	\le 
\liminf_{t\searrow 1}
\frac{1}{|\partial \hat{D}(\varepsilon)|}
\left| -\int_{\hat{\Sigma}_{\varepsilon}}k_{\alpha(t)}\,d\sigma +\int_{\hat{S}_{\varepsilon}}k_{\alpha(t)}\,d\sigma
+\int_{\hat{\Sigma}_{\varepsilon}^*}k_{\alpha(t)}\,d\sigma
\right|.
\end{align*}
Since the integration domains are far from the pole of the functions 
$h_{\alpha(t)}$ for $t\ge 1$, the $\liminf$ above is a limit and we can pass 
the limit under the integral signs, so obtaining 
\begin{align*}&\mathcal{K}(\partial \hat{D}(\varepsilon),0)
	\\	&
	\le   
	\frac{1}{|\partial \hat{D}(\varepsilon)|}
	\left| -\int_{\hat{\Sigma}_{\varepsilon}}k_{z}\,d\sigma +\int_{\hat{S}_{\varepsilon}}k_{z}\,d\sigma
	+\int_{\hat{\Sigma}_{\varepsilon}^*}k_{z}\,d\sigma
	\right|.
\end{align*}
By \eqref{e:halpha} and \eqref{e:kalpha}, 
\[\int_{\hat{\Sigma}_{\varepsilon}}k_{z}\,d\sigma=
|\hat{\Sigma}_{\varepsilon}|+
\int_{\hat{\Sigma}_{\varepsilon}}h_{z}\,d\sigma,
\]
 $h_{z}=0$ on $\partial B(0,1)$ and $\hat{\Sigma}_{\varepsilon}\subseteq \partial B(0,1)$, therefore 
\[\int_{\hat{\Sigma}_{\varepsilon}}k_{z}\,d\sigma=
|\hat{\Sigma}_{\varepsilon}|.
\]
Since $k_z$ is uniformly bounded w.r.t. $\varepsilon$ on the integration domains we conclude that there exists a 
positive constant $c$ such that 
\[\mathcal{K}(\partial \hat{D}(\varepsilon),0) \le c
\big(|\hat{\Sigma}_{\varepsilon}|+|\hat{S}_{\varepsilon}|+|\hat{\Sigma}_{\varepsilon}^*|\big).\]
Taking into account \eqref{e:KKt} and  the estimates \eqref{e:DepsBeps1}--\eqref{e:DepsBeps3} we conclude that 
there exists a constant  $c^*>0$ such that 
\begin{equation}\label{e:Kfromabove}\mathcal{K}(\partial D(\varepsilon),x(\varepsilon))\le c^*\varepsilon^{n-1}\end{equation}
 for every $\varepsilon<<1$.

\subsection{Conclusion}
\label{ss:conclT12}
By what proved in the subsections above we conclude that there exists $\varepsilon_0>0$ and an absolute constant $c>0$,  such that 
the family of the {\em translated  beaked spheres} $(\partial \hat{D}(\varepsilon))_{0<\varepsilon< \varepsilon_0}$ satisfies, for every $\varepsilon\in ]0,\varepsilon_0[$, the following properties:

 \noindent  by \eqref{e:inclusione}, 
\[
B(0,1)\subseteq \hat{D}(\varepsilon)\subseteq B(0,1+\varepsilon),\]
and $B(0,1)$ is the biggest ball centered at $0$ contained in $\hat{D}(\varepsilon)$; 

\medbreak \noindent
by \eqref{e:D-B1}--\eqref{e:D-B3},
  \[\frac{1}{c}\varepsilon^{n-1}\le |\partial \hat{D}(\varepsilon)|-|\partial B(0,1)|\le c\,\varepsilon^{n-1};\]
by  \eqref{e:stimasottoGG},    \[\displaystyle \liminf_{\varepsilon\to 0}\mathcal{G}(\partial \hat{D}(\varepsilon),0)>0;\]
  by \eqref{e:1.6Deps2} and \eqref{e:Kfromabove},   
\[\frac{1}{c}\varepsilon^{n-1}\le \mathcal{K}(\partial \hat{D}(\varepsilon),0)\le c\ \varepsilon^{n-1}.\]

\section{Proof of Theorem \ref{t:1.5}}
\label{ProofASZ}

We begin by proving the following lemma.
\begin{lemma}\label{l:preT1.5}
	Let $D\subseteq \mathbb{R}^n$ be a bounded open set containing a point 
	$x_0$ and such that $|\partial D|<\infty$.
	Then the following properties are equivalent.
	\begin{itemize}
		\item[(i)] $\displaystyle \fint_{\partial D}\Gamma(x-y)\,d\sigma(x)=\Gamma(x_0-y)$ \ for every $y\notin \overline{D}$
		\item[(ii)] $\displaystyle \fint_{\partial D}u(x)\,d\sigma(x)=u(x_0)$\ \  for every $u\in \mathcal{H}(\overline{D})$. 
	\end{itemize}
\end{lemma}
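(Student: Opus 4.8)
The plan is to establish the two implications separately; (ii)~$\Rightarrow$~(i) is immediate, and the content lies in (i)~$\Rightarrow$~(ii).

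\emph{Direction} (ii)~$\Rightarrow$~(i). Fix $y\notin\overline{D}$. The function $x\mapsto\Gamma(x-y)$ is harmonic in $\mathbb{R}^n\setminus\{y\}$, which is an open set containing $\overline{D}$; hence it lies in $\mathcal{H}(\overline{D})$, and applying (ii) to it gives exactly the identity in (i).

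\emph{Direction} (i)~$\Rightarrow$~(ii), first step: differentiate (i) in the pole. Both sides of the identity in (i) are (real-analytic) functions of $y$ on the open set $\mathbb{R}^n\setminus\overline{D}$. Since $|\partial D|=H^{n-1}(\partial D)<\infty$ and the kernel $\Gamma(x-y)$ together with all its $y$-derivatives is bounded uniformly for $x\in\partial D$ and $y$ in any fixed compact subset of $\mathbb{R}^n\setminus\overline{D}$, one may differentiate under the integral sign. Thus, for every index $i$ and every $y\notin\overline{D}$,
\[
\fint_{\partial D}\frac{\partial}{\partial y_i}\Gamma(x-y)\,d\sigma(x)=\frac{\partial}{\partial y_i}\Gamma(x_0-y).
\]
Combining these with the coefficients $\nu_i(\zeta)$ of a unit vector at $\zeta\notin\overline{D}$, one gets $\fint_{\partial D}\partial_{\nu(\zeta)}\Gamma(x-\zeta)\,d\sigma(x)=\partial_{\nu(\zeta)}\Gamma(x_0-\zeta)$: the mean value property at $x_0$ upgrades from the single-layer kernel $\Gamma(\,\cdot\,-\zeta)$ to the double-layer kernel $\partial_{\nu(\zeta)}\Gamma(\,\cdot\,-\zeta)$, as long as the pole $\zeta$ stays outside $\overline{D}$.

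Second step: a classical representation. Let $u\in\mathcal{H}(\overline{D})$, so $u$ is harmonic in some open $U\supseteq\overline{D}$; it is standard that there is a bounded smooth open set $D'$ with $\overline{D}\subset D'$ and $\overline{D'}\subset U$. Green's representation formula expresses, for every $x\in D'$, $u(x)$ as a fixed linear combination of a single-layer potential $\int_{\partial D'}\Gamma(x-\zeta)\,\partial_{\nu}u(\zeta)\,d\sigma(\zeta)$ and a double-layer potential $\int_{\partial D'}\partial_{\nu(\zeta)}\Gamma(x-\zeta)\,u(\zeta)\,d\sigma(\zeta)$, $\nu$ the outer normal of $D'$; in particular the formula holds both for $x\in\partial D$ and for $x=x_0\in D\subset D'$. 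Averaging the formula over $x\in\partial D$: since $\partial D'$ is compact and lies at positive distance from $\overline{D}$, the kernels are bounded on $\partial D\times\partial D'$ and Fubini's theorem lets the average over $\partial D$ pass inside the integrals over $\partial D'$. For each $\zeta\in\partial D'\subset\mathbb{R}^n\setminus\overline{D}$ one then replaces $\fint_{\partial D}\Gamma(x-\zeta)\,d\sigma(x)$ by $\Gamma(x_0-\zeta)$ using (i), and $\fint_{\partial D}\partial_{\nu(\zeta)}\Gamma(x-\zeta)\,d\sigma(x)$ by $\partial_{\nu(\zeta)}\Gamma(x_0-\zeta)$ using the first step. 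The resulting expression is precisely Green's formula for $u$ evaluated at the interior point $x_0$, so $\fint_{\partial D}u\,d\sigma=u(x_0)$, which is (ii).

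The only delicate points are of the bookkeeping kind: producing the intermediate smooth domain $D'$, and justifying the interchange of the average over $\partial D$ with the boundary integral over $\partial D'$ and with the $y$-differentiation in step one. All of these are harmless because $\partial D$ carries finite $H^{n-1}$ measure while the relevant kernels, together with their derivatives, are uniformly bounded on the compact set $\partial D\times\partial D'$ (respectively on $\partial D\times Q$ for compact $Q\subset\mathbb{R}^n\setminus\overline{D}$); \emph{no regularity of $\partial D$ itself is used}. I expect the main conceptual point to be recognising that Green's identity reduces an arbitrary $u\in\mathcal{H}(\overline{D})$ to single- and double-layer potentials whose poles are confined to $\partial D'\subset\mathbb{R}^n\setminus\overline{D}$ — exactly the regime in which hypothesis (i) and its first $y$-derivative supply the needed mean value identities.
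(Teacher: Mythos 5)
Your proof is correct, but it takes a genuinely different route from the paper's. Where you employ Green's representation formula on an intermediate smooth domain $D'$ with $\overline{D}\subset D'\subset\overline{D'}\subset U$ — which forces you to handle both the single-layer kernel $\Gamma(\cdot-\zeta)$ and the double-layer kernel $\partial_{\nu(\zeta)}\Gamma(\cdot-\zeta)$, and hence to first upgrade hypothesis (i) to its $y$-derivatives by differentiating under the integral sign — the paper instead picks a cutoff $\varphi\in C_0^\infty(O)$ with $\varphi\equiv 1$ on a neighborhood $O_1\supset\overline{D}$ and uses the Newtonian-potential identity
\[
(u\varphi)(x)=-\int_{\mathbb{R}^n}\Gamma(x-y)\,\Delta(u\varphi)(y)\,dy.
\]
Averaging over $\partial D$, applying Fubini, and noting that $\Delta(u\varphi)$ is supported in $\mathbb{R}^n\setminus O_1\subset\mathbb{R}^n\setminus\overline{D}$, the paper replaces $\fint_{\partial D}\Gamma(x-y)\,d\sigma(x)$ by $\Gamma(x_0-y)$ using (i) \emph{only}, never any derivative of it; the result is the same Newtonian representation evaluated at $x_0$, giving $u(x_0)\varphi(x_0)=u(x_0)$. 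The trade-offs are mild: your argument invokes standard machinery (Green's third identity, an intermediate smooth domain, Sard-type regularization) and needs the additional but easy step of differentiating (i); the paper's cutoff trick avoids double-layer potentials entirely and uses only a single application of Fubini, at the small cost of introducing the auxiliary $\varphi$ and working with a volume integral on all of $\mathbb{R}^n$ rather than a boundary integral on $\partial D'$. Both arguments correctly exploit the key fact that all poles stay outside $\overline{D}$, and neither uses any regularity of $\partial D$ beyond $H^{n-1}(\partial D)<\infty$.
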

\begin{proof}
	The implication	(ii) $\Rightarrow$  (i) is trivial. 
	
	\medbreak
	Let us prove 	(i) $\Rightarrow$  (ii).

	Consider 
	$u\in \mathcal{H}(\overline{D})$  and an open set $O$ containing $  \overline{D}$ such that 
	$u\in \mathcal{H}(O)$. Let $\varphi\in C_0^{\infty}(O,\mathbb{R})$ such that 
	$\varphi\equiv 1$ in $O_1$, with $O_1$ open set 
	satisfying 
	\[ \overline{D}\subseteq O_1\subseteq \overline{O}_1\subseteq O.\]
	Then, since $\Gamma$ is the fundamental solution of the Laplace operator, 
	\begin{align*}\displaystyle \fint_{\partial D}u(x)\,d\sigma(x)=&\frac{1}{|\partial D|}
	\int_{\partial D}(u\varphi)(x)\,d\sigma(x)
\\ 	=& -\frac{1}{|\partial D|}
	\int_{\partial D}\left(\int_{\mathbb{R}^n}
	\Delta(u\varphi)(y)\Gamma(x-y)\,dy\right)\,d\sigma(x)\\ =&
	-\frac{1}{|\partial D|}
	\int_{\mathbb{R}^n}
	\Delta(u\varphi)(y)\left(\int_{\partial D}\Gamma(x-y)\,d\sigma(x)\right)\,dy.
	\end{align*}
Since $\Delta(u\varphi)\equiv 0$ in $O_1$, by using (i) we conclude that 
	\begin{align*}\displaystyle \fint_{\partial D}u(x)\,d\sigma(x)=&
			-
		\int_{\mathbb{R}^n\setminus O_1}
		\Delta(u\varphi)(y)\left(\fint_{\partial D}\Gamma(x-y)\,d\sigma(x)\right)\,dy
	\\ =&
	-
\int_{\mathbb{R}^n}
\Delta(u\varphi)(y)\Gamma(x_0-y)\,dy
\\ &=u(x_0)\varphi(x_0)=u(x_0).\end{align*}		
\end{proof}

We are now ready to prove Theorem  \ref{t:1.5}.
\begin{proof}[Proof of Theorem \ref{t:1.5}]
Identity \eqref{e:1.9} can be written as follows:   
  	\[  		\int_{\partial D}\frac{\Gamma(x-y)}{\Gamma(x_0-y)}\,d\sigma(x)=c\qquad \forall y\notin \overline{D}.\]
  Since $D$ is bounded, 
	\[\underset{y\notin D}{\lim_{|y|\to \infty}}	\frac{\Gamma(x-y)}{\Gamma(x_0-y)}=1\]
	uniformly w.r.t. $x\in \partial D$. 
	As a consequence 
	\[c=\lim_{|y|\to \infty}\int_{\partial D}	\frac{\Gamma(x-y)}{\Gamma(x_0-y)}\,d\sigma(x)=|\partial D|.\]
This proves that $c=|\partial D|$, so that identity \eqref{e:1.9} can be written as follows 
\[	\fint_{\partial D}\Gamma(x-y)\,d\sigma(x)=\Gamma(x_0-y)\qquad \forall y\notin \overline{D}.\]
Then, by Lemma \ref{l:preT1.5}, 
\[	\fint_{\partial D}u(x)\,d\sigma(x)=u(x_0)\qquad \forall y\in \mathcal{H}(\overline{D}).\]
Hence, keeping in mind Remark \ref{r:aggiunto}, if  $T(\partial D,x_0)\ne \emptyset$, then 
$\mathcal{K}(\partial D,x_0)=0$, which implies, by Corollary \ref{c:1.3}, that  $|D\setminus B|=0$,  where $B$ is the biggest ball centered at $x_0$ and contained in $D$. This gives   $D=B$ and completes  the proof. 
\end{proof}

\section{Appendix}\label{s:appendix}

\subsection{Surface Gauss gap for Lyapunov-Dini domains}
\label{ss:A1}
Let $D\subseteq \mathbb{R}^n$ be a bounded Lyapunov-Dini domain ad let $x_0\in D$. If $G(x_0,\cdot)$ is the Green function of $D$ with pole at $x_0$ and $x\mapsto \nu(x)$ is the outward normal mapping on $\partial D$, then 
\[\partial D\ni x\mapsto h(x):=-\frac{\partial}{\partial \nu}G(x_0,\cdot)\]
is a continuous function (see \cite{Widman} Theorem 2.4, see also \cite{Paramonov} Theorem W1). The function $h$ is the Poisson kernel of $D$ with pole at $x_0$, i.e.
\[u(x_0)=\int_{\partial D} h(x)u(x)\,d\sigma(x)\qquad \forall u\in \mathcal{H}(D)\cap C(\overline{D}).\]
Then, for every $u\in \mathcal{H}(D)\cap C(\overline{D})$, we have 
\[u(x_0)-\fint_{\partial D}u(x)\,d\sigma(x)=\int_{\partial D} \left(h(x)-\frac{1}{|\partial D|}\right)u(x)\,d\sigma(x). \]
As a consequence 
\begin{align*}
	\mathcal{G}(\partial D, x_0)=&
	\sup_{0\ne u\in \mathcal{H}(D)\cap C(\overline{D})}\frac{\left|u(x_0)-\displaystyle \fint_{\partial D} u(x)\,d\sigma(x)\right|}{\displaystyle\fint_{\partial D} |u(x)|\,d\sigma(x)}
	\\ =&	
	\sup_{0\ne u\in \mathcal{H}(D)\cap C(\overline{D})}\frac{\displaystyle \left|\int_{\partial D} (|\partial D|h(x)-1)u(x)\,d\sigma(x)\right|}{\displaystyle\int_{\partial D} |u(x)|\,d\sigma(x)}.
\end{align*}
On the other hand, since $D$ is Dirichlet regular, 
\[\{u|_{\partial D}\,:\, u\in \mathcal{H}(D)\cap C(\overline{D})\}=C(\partial D).\]
It follows that 
\[	\mathcal{G}(\partial D, x_0)=
\sup_{0\ne \varphi\in   C(\partial D)}\frac{\displaystyle \left|\int_{\partial D} (|\partial D|h(x)-1) \varphi(x)\,d\sigma(x)\right|}{\displaystyle\int_{\partial D} | \varphi(x)|\,d\sigma(x)}\]
hence
\begin{equation}
	\label{e:A1}
	\mathcal{G}(\partial D, x_0)=	\sup_{\partial D} \left||\partial D|h(x)-1\right|.	
\end{equation}
In particular, since $h\in C(\partial D)$, 
\[	\mathcal{G}(\partial D, x_0)<\infty.\]

\subsection{The Surface Gauss gap version of Preiss and Toro stability result}\label{ss:comparison}
Let $D\subseteq \mathbb{R}^n$ be a  Dirichlet regular open set 
such that $H^{n-1}(\partial D)<\infty$ and $H^{n-1}\llcorner \partial D$ has Euclidean growth. Let $x_0\in D$ and denote by $h$ the Poisson kernel of $D$ with pole at $x_0$. Then, arguing as in Subsection \ref{ss:A1}, we get \eqref{e:A1}. Our aim here is to show how inequalities \eqref{1.4} and \eqref{1.5} can be deduced from Lemma 3.3 in Preiss and Toro's paper \cite{PT}. 

We first notice that, since the left and right hand sides of \eqref{1.4} and \eqref{1.5} are dilation invariant, we can and do assume $|\partial D|=1$. Then, if 
\[	\mathcal{G}(\partial D, x_0)=\varepsilon,\]
for $\varepsilon>0$ sufficiently small, from \eqref{e:A1} we obtain 
\[-\varepsilon<h-1<\varepsilon\quad \text{on $\partial D$}\]
so that 
\[\log(1-\varepsilon)<\log{h}<\log(1+\varepsilon)\quad \text{on $\partial D$}. \]
In particular, if $\varepsilon_0>0$ is small enough,
\[|\log{h}|\le 2\varepsilon\qquad \forall \varepsilon\in ]0, \varepsilon_0[.\] 
Then, by shrinking $\varepsilon_0$ if necessary, from Lemma 3.3 in \cite{PT} one obtains 
\begin{equation}
	\label{e:A3}
	e^{-2\varepsilon}\le 
	|\partial B|\le 
	|\partial B^*|\le e^{2\varepsilon}\qquad \forall \varepsilon\in ]0, \varepsilon_0[,
\end{equation}
where $B$ is the biggest ball centered at $x_0$ and {\em contained }
in $D$ and $B^*$ is the smallest ball centered at $x_0$ {\em containing  } $D$. 

From inequalities \eqref{e:A3}, by shrinking again $\varepsilon_0$ if necessary, one obtains 
\begin{equation}
	\label{e:A4}
	\frac{|\partial D|-|\partial B|}{|\partial D|}=1-|\partial B|\le 
	1-e^{-2\varepsilon}\le 4 \varepsilon
\end{equation}
and 
\begin{equation}
	\label{e:A5}
	\frac{|\partial B^*|-|\partial D|}{|\partial D|}=|\partial B^*|-1\le 
	e^{2\varepsilon}-1\le 4 \varepsilon
\end{equation}
for every $\varepsilon\in ]0, \varepsilon_0[$. Since $\varepsilon=\mathcal{G}(\partial D,x_0)$, inequalities  \eqref{e:A4} and  \eqref{e:A5} are, respectively, \eqref{1.4} and \eqref{1.5}. 
 
%
%
%

\end{document}